\definecolor{cadmiumgreen}{rgb}{0.0, 0.42, 0.24}
\def\ZZ{{\mathbb Z}}
\def\RR{{\mathbb R}}
\def\CC{{\mathbb C}}
\def\B{{\mathcal B}}
\def\A{{\mathcal A}}
\def\M{{\mathcal M}}
\def\H{{\mathcal H}}
\def\G{{\mathbf G}}
\def\opn#1#2{\def#1{\operatorname{#2}}} 
\opn\depth{depth} 
\opn\codim{codim}
\opn\ini{in} 
\opn\LM{LM}
\opn\LC{LC}
\opn\NF{NF}
\opn\Merge{Merge}
\opn\sgn{sgn}
\opn\div{div} 
\opn\Div{Div} 
\opn\Pic{Pic}
\opn\Prin{Prin}
\opn\Del{Del}
\opn\op{op}
\opn\ends{ends}
\opn\indeg{indeg} 
\opn\outdeg{outdeg}
\opn\red{red}
\opn\Spec{Spec} 
\opn\Supp{Supp} 
\opn\Meas{Meas}
\opn\supp{supp} 
\opn\Ker{Ker} 
\opn\Coker{Coker} 
\opn\sign{sign}
\opn\Hom{Hom}
\opn\Tor{Tor} 
\opn\id{id}
\opn\cl{cl}
\opn\span{span}
\opn\trace{trace}
\opn\Image{Image}
\opn\con{conv} 
\opn\relint{rel.int} 
\opn\vol{vol}
\opn\im{im}
\opn\val{val}
\opn\Zh{Zh}
\opn\Ber{Ber}
\opn\can{can}
\opn\syz{{\rm syz}}
\opn\spoly{{\rm spoly}}
\opn\LM{{\rm LM}}
\opn\lm{{\rm lm}}
\opn\lcm{{\rm lcm}} 
\opn\Tr{{\rm Tr}}
\opn\A{\mathcal A}
\opn\L{L}
\opn\dist{dist}
\opn\pd{pd}
\opn\en{en}
\opn\H{\mathrm{H}}
\opn\V{\mathrm{V}}
\opn\B{\mathcal{B}}
\opn\Tr{\mathrm{Tr}}
\opn\Gr{\mathbf{Gr}}
\def\Implies{\ifmmode\Longrightarrow \else
        \unskip${}\Longrightarrow{}$\ignorespaces\fi}
\def\implies{\ifmmode\Rightarrow \else
        \unskip${}\Rightarrow{}$\ignorespaces\fi}
\def\iff{\ifmmode\Longleftrightarrow \else
        \unskip${}\Longleftrightarrow{}$\ignorespaces\fi}
\newtheorem{Theorem}{Theorem}[section]
\newtheorem{Lemma}[Theorem]{Lemma}
\newtheorem{Proposition}[Theorem]{Proposition}
\theoremstyle{remark}
\newtheorem{Remark}[Theorem]{Remark}
\theoremstyle{definition}
\newtheorem{Definition}[Theorem]{Definition}
\def\qed{\ifhmode\textqed\fi
      \ifmmode\ifinner\quad\qedsymbol\else\dispqed\fi\fi}
\def\textqed{\unskip\nobreak\penalty50
       \hskip2em\hbox{}\nobreak\hfil\qedsymbol
       \parfillskip=0pt \finalhyphendemerits=0}
\def\dispqed{\rlap{\qquad\qedsymbol}}
\tikzstyle{Cwhite}=[scale = .8,circle, fill = white, minimum size=3mm] 
\tikzstyle{Cgray}=[scale = .4,circle, fill = gray, minimum size=3mm] 
\tikzstyle{Cblack2}=[scale = .4,circle, fill = black, minimum size=5mm] 
\tikzstyle{Cblack}=[scale = .7,circle, fill = black, minimum size=3mm]
\tikzstyle{C0}=[scale = .9,circle, fill = black!0, inner sep = 0pt, minimum size=3mm]
\tikzstyle{C1}=[scale = .7,circle, fill = black!0, inner sep = 0pt, minimum size=3mm]
\tikzstyle{Cred}=[scale = .4,circle, fill = red, minimum size=3mm]
\title[Limits of canonical forms on towers of Riemann surfaces]{Limits of canonical forms on towers of Riemann surfaces}
\author{Hyungryul Baik}
\address{Department of Mathematical Sciences, KAIST,  
291 Daehak-ro, Yuseong-gu, Daejeon 34141, South Korea}
\email{\href{mailto:hrbaik@kaist.ac.kr}{hrbaik@kaist.ac.kr}}
\author {Farbod Shokrieh}
\address{Department of Mathematical Sciences, University of Copenhagen\\
Universitetsparken 5, 2100 K\o benhavn \O \\ 
Denmark
}
\email{\href{mailto:farbod.shokrieh@gmail.com}{farbod.shokrieh@gmail.com}}
\author{Chenxi Wu}
\address{Department of Mathematics, Rutgers University\\
Piscataway, New Jersey 08854-8019\\USA}
\email{\href{mailto:wuchenxi2013@gmail.com}{wuchenxi2013@gmail.com}}
\subjclass[2010]{
\href{https://mathscinet.ams.org/msc/msc2010.html?t=14H55}{14H55}, 
\href{https://mathscinet.ams.org/msc/msc2010.html?t=14H30}{14H30}, 
\href{https://mathscinet.ams.org/msc/msc2010.html?t=28A33}{28A33}, 
\href{https://mathscinet.ams.org/msc/msc2010.html?t=35J05}{35J05},
\href{https://mathscinet.ams.org/msc/msc2010.html?t=58A10}{58A10}}
\date{\today}
\begin{document}

\begin{abstract}
We prove a generalized version of Kazhdan's theorem for canonical forms on Riemann surfaces.
In the classical version, one starts with an ascending sequence $\{S_n \rightarrow S\}$ of finite Galois covers of a hyperbolic Riemann Surface $S$, converging to the {\em universal cover}. The theorem states that the sequence of forms on $S$ inherited from the canonical forms on $S_n$'s converges uniformly to (a multiple of) the hyperbolic form.
We prove a generalized version of this theorem, where {\em the universal cover} is replaced with {\em any infinite Galois cover}. Along the way, we also prove a Gauss--Bonnet type theorem in the context of arbitrary infinite Galois covers.
\end{abstract}

\maketitle

\setcounter{tocdepth}{1}
\tableofcontents

\section{Introduction}\label{sec:introduction}
\subsection{Background}

A compact connected Riemann surface $S$ of genus $g \geq 2$ can be given a {\em canonical (Arakelov) $(1,1)$-form} by embedding the surface inside its Jacobian via the Abel-Jacobi map, and pulling back the canonical (`Euclidean') translation-invariant $(1,1)$-form.
A celebrated theorem of Kazhdan (\cite[\S3]{Kaj}, see also \cite{mumford, Kazhdan}) states that `the hyperbolic form is the limit of the canonical forms': if $\{S_n \rightarrow S\}$ is an ascending sequence of finite Galois covers converging to the {\em universal cover}, then the $(1,1)$-forms on $S$ inherited from canonical forms via $S_n \rightarrow S$ converge uniformly to a multiple of the hyperbolic $(1,1)$-form. See \cite{rhodes1993sequences} and \cite[Appendix]{mcmullen2013entropy} for two different proofs of this result. See also \cite{Yau, Donnelly, ohsawa2009remark, Ohsawa2, Treger, Yeung, ChenFu} for various related results and generalizations. The purpose of this work is to prove a more general version of this theorem, where {\em the universal cover} is replaced with {\em any infinite Galois cover}. Our results also make sense in genus $g=1$.

It is known that the canonical form on the Poincar\'e unit disk is the same as the hyperbolic form (up to a constant multiple). So Kazhdan's theorem can be restated, informally, as follows: the limit of the induced forms from finite Galois covers coincides with the induced form from the limiting space (the universal cover). This is the statement that we generalize. 

\subsection{Our results}

The main purpose of this paper is to prove the following.

\vspace{3mm}
\noindent{\bf Theorem~A. (A generalized Kazhdan theorem)} 
Let $S'$ be {\em any infinite Galois cover} of a compact connected Riemann surface $S$ of genus $g \geq 1$. Let $\{S_n \rightarrow S\}$ be a sequence of finite Galois covers converging to $S'$. Then the sequence of $(1,1)$-forms on $S$ induced from the canonical forms on $S_n$ {\em converges uniformly} to the $(1,1)$-form induced from the canonical form on $S'$.

\vspace{1mm}
See Theorem~\ref{thm:arakelov conv} for a more precise statement. 
\vspace{2mm}

Most of our work goes into the proof of a weaker result, stating that the associated measures attached to $(1,1)$-forms converge strongly (Theorem~\ref{thm:Kazhdan_surface}). The uniform convergence of forms will then follow from a standard (but subtle) analytic argument.

We first directly define and study the notion of canonical measures on Riemann surfaces (Definition~\ref{def:canmeas}). These are precisely the measures attached to canonical $(1,1)$-forms (Lemma~\ref{lem:measvsform}), but they can be defined in more general situations (e.g. even if the surface is not orientable -- see Remark~\ref{rmk:canmeas} (iii)). The main reason we study these measures directly is that they are closely related to the operator theory on various natural Hilbert spaces attached to Riemann surfaces (see, e.g., Proposition~\ref{prop:cm_projection_formula}). This will allow us to use powerful techniques from operator theory and the theory of von Neumann algebras.

A fundamental property of the hyperbolic measure on a compact Riemann surface is the well-known consequence of the Gauss--Bonnet formula, stating that the hyperbolic volume of a Riemann surface $S$ has a simple expression in terms of the Euler characteristic of $S$. Our second main theorem states that {\em all limiting measures} (coming from any infinite Galois cover) satisfy a similar Gauss--Bonnet type property.

\vspace{3mm}
\noindent{\bf Theorem~B. (A generalized Gauss--Bonnet)} 
Let $\phi:S' \to S$ be an infinite Galois covering of a compact connected Riemann surface $S$ of genus $g \geq 1$, and let $\mu$ be the measure on $S$ induced from the canonical measure on $S'$. Then 
\[\mu(S)= -\frac{1}{2} \chi(S)= g-1 \, .\]

\vspace{1mm}
See Theorem~\ref{thm:GaussBonnet} and Remark~\ref{rmk:L2betti2}. 
\vspace{2mm}

Indeed, Theorem~B is also a crucial ingredient in the proof of convergence of canonical measures (Theorem~\ref{thm:Kazhdan_surface}). Our proof shows that one may think of the equality $\mu(S)=g-1$ as a `trace formula'.
For the proof of Theorem~B, we will need a slight variant of L\"uck's approximation theorem (\cite[Theorem 0.1]{Luck2}) to deal with general infinite Galois covers (Theorem~\ref{thm:luck_l2dim}).

\subsection{Related work and directions}
Kazhdan's theorem is usually stated in terms of the {\em canonical metric}, i.e. the metric $ds^2$ whose associated $(1,1)$-form $-{\rm Im}\,{ds^2}/2$ is our canonical $(1,1)$-form. This is a hermitian metric on the holomorphic tangent bundle on the Riemann surface.
It is well-known that such a metric is directly recovered from its associated $(1,1)$-form (see, e.g., \cite[pp 28--29]{GriffHar}). As we mostly work with the corresponding measure, we found it more convenient to state and prove our results in the language of the canonical forms. See also Remark~\ref{rmk:metricsVSforms} for some related notions and terminology in the literature. 

Metric graphs may be considered as tropical (or non-Archimedean) analogues of Riemann surfaces. For metric graphs, analogues of Theorem~A and Theorem~B are proved in \cite{ShokriehWu} by the second and third named authors. Following the discussion in \cite[\S7]{ShokriehWu}, we expect that the limiting measure inherited from the `Schottky cover' of a Riemann surface to be closely related to other notions such as Poisson-Jensen and equilibrium measures. The limiting measure inherited from the maximal abelian cover should also have a nice intrinsic interpretation.

The starting point for our approach was the very slick proof of the classical Kazhdan's theorem given by Curtis McMullen in \cite[Appendix]{mcmullen2013entropy} (based on the original argument in \cite[\S3]{Kaj}). The main difficulty in our generalization is the basic fact that one knows explicitly the limiting measure on the universal cover (i.e. the hyperbolic measure) whereas, for an arbitrary infinite Galois cover, one does not have such explicit knowledge. Our use of $L^2$ and von Neumann algebraic techniques is to overcome this difficulty.

It is conceivable that our work can be generalized in various directions; there could be Kazhdan-type theorems for other types of limits of Riemann surfaces, most importantly Benjamini--Schramm limits in the sense of \cite{ABBGNRS}.

\subsection{Structure of the paper}
In \S\ref{sec:NotBack} we review some basic facts and set the terminology and notations.
In \S\ref{sec:canonical_measure} we give the definitions for canonical measures and forms, and establish some of their basic properties. Our emphasis is to give operator theoretic interpretations and formulas.
In \S\ref{sec:l2approx} we first review some basics from the theory of von Neumann algebras and dimensions, as well as the $L^2$ Hodge--de Rham theory. The main purpose is to prove a variant of L\"uck's approximation theorem (Theorem~\ref{thm:luck_l2dim}).
In \S\ref{sec:proof_Kazhdahn_surface} we prove our main results (Theorem~A and Theorem~B).

\subsection*{Acknowledgement}
We would like to thank John Hubbard, Bingbing Liang, Curtis McMullen, and Nicolas Templier for their interest in this project, and for helpful remarks and conversations. 
We would also like to thank Curtis McMullen, Nicolas Templier, and the anonymous referee for their comments on an earlier draft. 
The first author was partially supported by Samsung Science \& Technology Foundation grant no. SSTF-BA1702-01.

\section{Notation and Background} \label{sec:NotBack}

\subsection{$L^2$ forms on Riemann surfaces}
Throughout, by a Riemann surface, we will mean a (possibly non-compact) {\em connected} Riemann surface. Our compact Riemann surfaces will always have genus $g \geq 1$.

Let $S$ be a Riemann surface, and let $\Omega_{L^2}(S)$ denote the Hilbert space completion of $\Omega^1_c(S)$, the space of complex (global) $1$-forms with compact support endowed with the hermitian inner product
\begin{equation}\label{eq:innerprod}
\langle \alpha, \beta\rangle= \int_S \alpha\wedge \star\bar{\beta }\, ,
\end{equation}
where $\star$ is the Hodge star operator. As usual, we use the notation $\| \alpha \|_{L^2}^2 = \langle \alpha , \alpha \rangle$. 

Recall, on a Riemann surface, the Hodge star operator is defined by the local formula
\begin{equation} \label{star_eq}
\star dz= -idz \quad , \quad \star \, d\bar{z}= id\bar{z} \, .
\end{equation}
In particular, it depends only on the complex structure and not on the choice of the Riemannian metric. 

\begin{Remark}\label{rmk:holant}
The space of holomorphic $1$-forms ({\em differentials of the first kind}) $\Omega_{L^2}^{1,0}(S) \subset \Omega_{L^2}(S)$ and antiholomorphic $1$-forms $\Omega_{L^2}^{0,1}(S) \subset \Omega_{L^2}(S)$ are orthogonal under the inner product in \eqref{eq:innerprod}. For $\alpha, \beta\in\Omega_{L^2}^{1,0}(S)$, one computes
\[
\langle \alpha, \beta \rangle = \langle \bar{\alpha}, \bar{\beta} \rangle = i \int_S \alpha\wedge \bar{\beta} = 2 \, (\alpha, \beta) \, 
\]
where $(\cdot , \cdot)$ is the usual {\em Hodge inner product} on $\Omega_{L^2}^{1,0}(S)$.
\end{Remark}

\begin{Remark}\label{rmk:separable}
The Hilbert space $\Omega_{L^2}(S)$ is {\em separable}. This is because the Riemann surface $S$ is second-countable and, consequently, the Borel $\sigma$-algebra is countably generated. Since the Riemann--Lebesgue measure is $\sigma$-finite on $S$, it follows from \cite[Proposition 3.4.5]{Cohn} that $\Omega_{L^2}(S)$ admits a countable basis.
\end{Remark}
 
\begin{Remark} \label{rmk:subsurface}
Let $U \subseteq S$ be an open subset. Then $U$ is itself a Riemann surface  and we have a natural inclusion (extension by zero) of Hilbert spaces $\Omega_{L^2}(U) \xhookrightarrow{} \Omega_{L^2}(S)$.
\end{Remark}
\subsection{Convergence of measures and forms} \label{sec:measform}

We are mainly concerned with the measurable space consisting of a Riemann surface $S$ together with its Borel $\sigma$-algebra $\mathcal{B}$.

\medskip

A sequence of measures $\{\mu_n\}$ on a measurable space $(S, \mathcal{B})$ is said to {\em converge strongly} to a measure $\mu$ if we have ${\mu_n(A)} \rightarrow \mu(A)$ for every $A \in \mathcal{B}$. 

\medskip

Given a nonnegative $(1,1)$-form $\eta$ on a Riemann surface $S$, the map
\[
A\mapsto \int_A\eta
\] 
defines a measure on $S$, which we denote by $\mu_\eta$. Here, the set $A$ is considered together with the orientation inherited from the surface $S$.

\medskip

A sequence of nonnegative $(1,1)$-forms $\{\eta_n\}$ on $S$ {\em converges weakly} to a $(1,1)$-form $\eta$ if the sequence of associated measures $\{\mu_{\eta_n}\}$ converges strongly to the measure $\mu_\eta$. 

\medskip

Fix a finite analytic atlas $\{(U_1, z_1), \ldots , (U_k, z_k)\}$ for a {compact} Riemann surface $S$. We say a sequence of nonnegative $(1,1)$-forms $\{\eta_n\}$ on $S$ {\em converges uniformly} to a $(1,1)$-form $\eta$ if $\{\eta_n\}$ converges uniformly to $\eta$ on each coordinate chart $(U_i, z_i)$. If $z = x+iy$ is a local analytic coordinate in a domain $U \subset S$ and
\[ \eta_n = \rho_n \, dx \wedge dy = ({i}/{2}) \, \rho_n \, dz \wedge d\bar{z} \, ,\]
\[\eta = \rho \, dx \wedge dy = ({i}/{2}) \, \rho \, dz \wedge d\bar{z}\, ,\] 
we say $\{\eta_n\}$  converges uniformly to $\eta$ on $(U, z)$ if the sequence of real-valued nonnegative functions $\{\rho_n\}$ converges uniformly to $\rho$ on $U$.

\section{Canonical measures and forms}\label{sec:canonical_measure}

\subsection{Canonical measures}

Let $S$ be a Riemann surface, and let $\mathcal{H}_{L^2}(S) \subseteq \Omega_{L^2}(S)$ be the subspace consisting of harmonic $1$-forms: 
\begin{equation} \label{eq:harmonics}
\mathcal{H}_{L^2}(S) = \{ \omega \in \Omega_{L^2}(S) \colon \Delta(\omega)=0  \} \, ,
\end{equation}
where $\Delta$ denotes the usual {\em Hodge Laplacian} (also known as the {\em Laplace--de Rham} operator) on $1$-forms. It is self-adjoint with respect to the hermitian inner product in equation \eqref{eq:innerprod}: if $\omega_1, \omega_2 \in \Omega_{L^2}(S)$ are supported on the interior of $S$ we have
\begin{equation}\label{eq:selfadj}
\langle \Delta(\omega_1) , \omega_2 \rangle = \langle \omega_1, \Delta(\omega_2) \rangle \, 
\end{equation}
Because the operator $\Delta$ is elliptic, it follows from elliptic regularity (see, e.g., \cite[Chapter 6]{evans1998partial}) that harmonic $1$-forms are indeed smooth. Moreover, $\mathcal{H}_{L^2}(S)$ forms a Hilbert subspace of $\Omega_{L^2}(S)$. This follows from, e.g., the $L^2$ Hodge--de Rham decomposition (\cite[Theorem 1.57]{luck}).

\begin{Definition} \label{def:canmeas}
Let $S$ be a Riemann surface. Let $\{u_j\}_{j\in J}$ be an orthonormal basis for the Hilbert space $\mathcal{H}_{L^2}(S)$.
The {\em canonical measure} on $S$ is defined by
\[
\mu_{\can}^S(A)=\frac{1}{2} \sum_{j\in J} \int_A u_j\wedge \star \bar{u}_j
\]
for any Borel subset $A \subseteq S$. 
\end{Definition}
\begin{Remark} \phantomsection \label{rmk:canmeas}
\begin{itemize}\label{rmk:mucan}
\item[]
\item[(i)] If the surface $S$ is clear from the context, we will use $\mu_{\can}$ instead of $\mu_{\can}^S$.
\item[(ii)] The index set $J$ is countable (Remark~\ref{rmk:separable}). Therefore $\mu_{\can}$ is indeed a measure on $S$, as it is a countable sum of integrals of smooth $2$-forms. 
\item[(iii)] If one uses the opposite orientation on $S$, the forms $\star u_j$ become their negative and the measure $\mu_{\can}$ remains unchanged. Our computations and arguments about $\mu_{\can}$ can be generalized to the case where $S$ is not orientable.
\end{itemize}
\end{Remark}
\begin{Lemma}\phantomsection \label{lemcan} 
The following statements hold:
\begin{itemize}
  \item[(a)] The definition of $\mu_{\can}$ is independent of the choice of the orthonormal basis $\{u_j\}_{j\in J}$.
  \item[(b)] If $\phi: S'\rightarrow S$ is a Galois covering map, the canonical measure on $S'$ is the pullback of a measure on $S$, which we shall henceforth denote by $\mu_{\can, \phi}$.
   \item[(c)] If $S$ is compact then $\mu_{\can}(S)=\frac{1}{2}\dim_{\CC}\mathcal{H}_{L^2}(S) = g$.
   \item[(d)] The measure $\mu_{\can}$ depends only on the complex structure on $S$ (and not on the particular choice of the Riemannian metric).
   \item[(e)] $\mu_{\can}$ is invariant under conformal transformations.
  \end{itemize}
\end{Lemma}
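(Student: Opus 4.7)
The plan is to handle (a)--(e) in the stated order, with (a) and (d) doing most of the conceptual work.

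For (a), fix a local coordinate chart; each $2$-form $u_j\wedge \star \bar u_j$ is pointwise a nonnegative multiple of the area form, so monotone convergence justifies interchanging the sum with the integral in Definition~\ref{def:canmeas}. It then suffices to show that the pointwise density $\sum_{j\in J} u_j\wedge \star \bar u_j$ is independent of the chosen orthonormal basis. If $v_k = \sum_j c_{jk} u_j$ is another orthonormal basis, then the unitarity identity $\sum_k c_{jk}\overline{c_{j'k}} = \delta_{jj'}$ converts $\sum_k v_k \wedge \star \bar v_k$ into $\sum_j u_j \wedge \star \bar u_j$ pointwise. The nontrivial step is absolute convergence of this pointwise sum, so that the rearrangement is legal; for this I would invoke elliptic regularity for the Hodge Laplacian, which makes evaluation at each $p \in S$ a bounded linear functional on $\mathcal{H}_{L^2}(S)$, so that $\sum_j |u_j(p)|^2$ is finite and equals the squared norm of that functional (manifestly basis-independent).

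For (b), the deck group $\Gamma = \mathrm{Gal}(S'/S)$ acts on $S'$ by biholomorphisms, and hence isometrically on $\Omega_{L^2}(S')$ and on its harmonic subspace $\mathcal{H}_{L^2}(S')$. Thus $\{\gamma^* u_j\}$ is again an orthonormal basis for any $\gamma \in \Gamma$, and by (a) the local density is $\Gamma$-invariant; consequently $\mu_{\can}^{S'}$ descends along $\phi$ to the desired measure $\mu_{\can,\phi}$ on $S$.

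For (c), the classical Hodge decomposition on a compact genus-$g$ surface gives $\mathcal{H}_{L^2}(S) = \Omega_{L^2}^{1,0}(S) \oplus \Omega_{L^2}^{0,1}(S)$ with each summand of complex dimension $g$ (cf.\ Remark~\ref{rmk:holant}), so $\dim_{\CC}\mathcal{H}_{L^2}(S) = 2g$ and integrating the formula from (a) yields $\mu_{\can}(S) = \tfrac12 \sum_j \|u_j\|_{L^2}^2 = \tfrac12 \dim_{\CC}\mathcal{H}_{L^2}(S) = g$. For (d), equation \eqref{star_eq} shows that the Hodge star depends only on the complex structure; hence so do the inner product \eqref{eq:innerprod}, the Laplacian $\Delta$, the space $\mathcal{H}_{L^2}(S)$, and the orthonormality condition, and therefore so does $\mu_{\can}$. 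For (e), in complex dimension one a conformal transformation is precisely a biholomorphism, so preserves the complex structure, and (e) is immediate from (d).

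The main obstacle is the convergence and basis-independence in part (a): since $J$ may be infinite, rearranging the pointwise sum requires absolute convergence. I expect this to reduce cleanly to elliptic regularity for $\Delta$, which bounds $|u(p)|$ in terms of $\|u\|_{L^2}$ for harmonic $u$ and thereby turns $\sum_j |u_j(p)|^2$ into the squared norm of a bounded evaluation functional on $\mathcal{H}_{L^2}(S)$.
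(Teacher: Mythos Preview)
Your proposal is correct, but your argument for (a) takes a genuinely different route from the paper's. You work \emph{pointwise}: using elliptic regularity to make evaluation at $p$ a bounded linear map $\mathcal{H}_{L^2}(S)\to T_p^*S\otimes\CC$, you identify $\sum_j u_j(p)\wedge\star\bar u_j(p)$ with (a multiple of) the squared Hilbert--Schmidt norm of this evaluation, which is manifestly basis-independent; then you integrate. The paper instead works \emph{globally}: for each Borel set $A$ it introduces the positive bounded operator $F_A$ on $\mathcal{H}_{L^2}(S)$ defined by $\langle F_A\alpha,\beta\rangle=\int_A\alpha\wedge\star\bar\beta$, observes that $2\mu_{\can}(A)=\Tr(F_A)$, and invokes basis-independence of the trace of a positive operator. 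The paper's route is shorter and sidesteps any pointwise convergence or rearrangement issues; your route does more work but actually yields more, namely that the canonical measure has a smooth pointwise density (essentially the diagonal of the Bergman kernel), which the paper only recovers later in \S\ref{sec:arakelovrelation} via Lemma~\ref{lem:measvsform}. For (b)--(e) your arguments are essentially the same as the paper's, though the paper is terser (e.g.\ for (b) it just says ``$\mu_{\can}$ is invariant under isometries'').
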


\begin{proof} 
(a) For a Borel subset $A \subseteq S$, consider the nonnegative self-adjoint operator $F_A: \mathcal{H}_{L^2}(S)\rightarrow\mathcal{H}_{L^2}^{\vee}(S) \simeq \mathcal{H}_{L^2}(S)$ defined by $(F_A(\alpha))(\beta)=\int_A \alpha\wedge \star \bar{\beta}$. The trace of $F_A$, with respect to the orthonormal basis $\{u_j\}_{j\in J}$, is:
\[
{\rm Tr}(F_A)=\sum_{j\in J} \langle F_A(u_j),u_j\rangle= 2\mu_{\can}(A) \, .
\]
The result now follows from the independence of trace from the choice of basis.

(b) It follows from the definition that $\mu_{\can}$ is invariant under isometries.

(c) When $S$ is compact, we have $|J| = \dim_{\CC}\mathcal{H}_{L^2}(S) = 2g$.

(d) This follows from the fact that the Hodge star operator depends only on the complex structure (see \eqref{star_eq}).

(e) This is the immediate consequence of part (d).
\end{proof}

\subsection{Properties of canonical measures}

Some of the following properties of $\mu_{\can}^S$ follow from the known properties of canonical forms (see \S\ref{sec:arakelovrelation}), but we will give a self-contained treatment here.
We begin by giving an alternate formula for the canonical measure of {\em open sets} in terms of orthogonal projections.

\begin{Proposition}\label{prop:cm_projection_formula}
Let $S$ be a Riemann surface. Let $\pi_S \colon \Omega_{L^2}(S) \rightarrow \mathcal{H}_{L^2}(S)$ be the orthogonal projection, and let $A\subseteq S$ be any open subset. Then
\[\mu_{\can}(A)= \frac{1}{2}\sum_{k \in I}\langle \omega_k, \pi_S (\omega_k)\rangle \, ,\]
where $\{\omega_k\}_{k \in I}$ is an orthonormal basis for the Hilbert subspace $\Omega_{L^2}(A) \subseteq \Omega_{L^2}(S)$. 
\end{Proposition}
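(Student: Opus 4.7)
The plan is to unfold the definition of $\mu_{\can}$, expand the restriction of each harmonic basis element in the basis $\{\omega_k\}$, and then swap the two sums to recognize the answer as a sum of norms of projections.

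More precisely, I would start by rewriting the definition: since $u_j \wedge \star \bar{u}_j$ is precisely the $L^2$ integrand, the restriction of $u_j$ to the open set $A$ lies in $\Omega_{L^2}(A)$ and satisfies
\[
\int_A u_j \wedge \star \bar{u}_j \;=\; \|u_j|_A\|_{L^2(A)}^2,
\]
so that $2\mu_{\can}(A) = \sum_{j\in J} \|u_j|_A\|^2$. Next I would expand $u_j|_A$ in the orthonormal basis $\{\omega_k\}_{k\in I}$ of $\Omega_{L^2}(A)$. The crucial point (Remark~\ref{rmk:subsurface}) is that extension by zero is an isometric embedding $\Omega_{L^2}(A) \hookrightarrow \Omega_{L^2}(S)$, so for each $k$ we have
\[
\langle u_j|_A, \omega_k\rangle_{L^2(A)} \;=\; \langle u_j, \omega_k\rangle_{L^2(S)},
\]
where $\omega_k$ on the right is viewed as extended by zero to $S$. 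Parseval on $\Omega_{L^2}(A)$ then gives $\|u_j|_A\|^2 = \sum_{k\in I} |\langle u_j, \omega_k\rangle_{L^2(S)}|^2$.

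At this point I would interchange the two sums (legitimate since all terms are nonnegative) to obtain
\[
2\mu_{\can}(A) \;=\; \sum_{k\in I}\sum_{j\in J} |\langle u_j, \omega_k\rangle_{L^2(S)}|^2.
\]
Since $\{u_j\}_{j\in J}$ is an orthonormal basis of $\mathcal{H}_{L^2}(S)$, the inner sum is the squared $L^2$-norm of the component of $\omega_k$ in $\mathcal{H}_{L^2}(S)$, namely $\|\pi_S(\omega_k)\|^2$. Finally, since $\pi_S$ is a self-adjoint idempotent, $\|\pi_S(\omega_k)\|^2 = \langle \pi_S(\omega_k), \pi_S(\omega_k)\rangle = \langle \omega_k, \pi_S(\omega_k)\rangle$, which yields the desired formula.

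There is no serious obstacle here: the only technical points are the justification of the swap of (positive) series, which is immediate by Tonelli, and the identification of inner products on $\Omega_{L^2}(A)$ with inner products on $\Omega_{L^2}(S)$ via extension by zero — valid because $A$ is open and so $\omega_k$ extended by zero genuinely lies in $\Omega_{L^2}(S)$. One can also view the argument as the basis-independent statement that $2\mu_{\can}(A) = \Tr(\pi_S \iota_A \iota_A^*)$ computed in two different ways (once on $\mathcal{H}_{L^2}(S)$ via $\{u_j\}$, once on $\Omega_{L^2}(A)$ via $\{\omega_k\}$), but the elementary Parseval argument above is cleaner.
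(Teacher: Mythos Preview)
Your proof is correct and follows essentially the same approach as the paper: both arguments reduce the identity to the double sum $\sum_{j,k}|\langle u_j,\omega_k\rangle|^2$ via Parseval, swap the order of summation by Tonelli, and identify the result with $\langle \omega_k,\pi_S(\omega_k)\rangle$. The only difference is cosmetic --- you start from the definition of $\mu_{\can}(A)$ and work toward the projection formula, whereas the paper starts from $\sum_k\langle\omega_k,\pi_S(\omega_k)\rangle$ and works back to $2\mu_{\can}(A)$; your closing trace remark is also present in spirit in the paper's Lemma~\ref{lemcan}(a).
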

\begin{proof}
Let $\{u_j\}_{j \in J}$ be an orthonormal basis for $\mathcal{H}_{L^2}(S)$. Then  
\[
\begin{aligned}
\sum_{k \in I}\langle \omega_k, \pi_S (\omega_k)\rangle &=\sum_{k \in I} \langle \omega_k, \sum_{j\in J} \langle u_j, \omega_k\rangle u_j\rangle \\
&=\sum_{k \in I} \sum_{j\in J}  \langle u_j, \omega_k\rangle\langle \omega_k, u_j\rangle \\
& =\sum_{k \in I} \sum_{j \in J} |\langle u_j, \omega_k\rangle|^2 \\
&=\sum_{j \in J} \sum_{k \in I} |\langle u_j, \omega_k\rangle|^2 &\text{(Tonelli's theorem)}\\
& =\sum_{j \in J} \int_{A} u_j\wedge \star \bar{u}_j &\text{(Parseval's theorem)}\\
&=2\mu_{\can}(A) \, .
\end{aligned}
\]
\end{proof}

\begin{Proposition}\label{cor:cm_convergence}
Let $S$ be a Riemann surface.
\begin{itemize}
\item[(a)] Assume $U\subseteq S$ is an open subset. For any open subset $A\subseteq U$ we have
  \[\mu_{\can}^U(A)\geq \mu_{\can}^S(A)\, .\]  
\item[(b)] Let $U_1\subseteq U_2\subseteq \cdots \subseteq S$ be a sequence of open subsets with $S=\bigcup_j U_j$. Then, for any open subset $A\subseteq S$,
\[\lim_{j\rightarrow\infty} \mu_{\can}^{U_j}(A)= \mu_{\can}^S(A)\]

\end{itemize}
\end{Proposition}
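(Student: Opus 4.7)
For part (a), I plan to apply the projection formula of Proposition~\ref{prop:cm_projection_formula} to both $\mu_{\can}^U$ and $\mu_{\can}^S$ using a common orthonormal basis $\{\omega_k\}$ of $\Omega_{L^2}(A)$, viewed in both ambient Hilbert spaces via extension by zero (Remark~\ref{rmk:subsurface}). The task then reduces to establishing the inequality $\|\pi_S(\omega)\|_S \leq \|\pi_U(\omega)\|_U$ for every $\omega \in \Omega_{L^2}(A)$. This will follow once we observe that $h := \pi_S(\omega)|_U$ lies in $\mathcal{H}_{L^2}(U)$, because harmonicity is a local condition and $\|h\|_U \leq \|\pi_S(\omega)\|_S$. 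Using that $\omega$ is supported inside $U$,
\[
\|\pi_S(\omega)\|_S^2 = \langle \omega, \pi_S(\omega)\rangle_S = \langle \omega, h\rangle_U = \langle \pi_U(\omega), h\rangle_U \leq \|\pi_U(\omega)\|_U \cdot \|\pi_S(\omega)\|_S,
\]
and dividing (when the left-hand side is nonzero) gives the claim; summing over $k$ yields (a).

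For part (b), the inequality in (a) already supplies the one-sided bound $\mu_{\can}^{U_j}(A) \geq \mu_{\can}^S(A)$, so the real work is the matching upper bound in the limit. I would first prove the single-form statement $\lim_{j\to\infty} \|\pi_{U_j}(\omega)\|^2 = \|\pi_S(\omega)\|^2$ for every $\omega \in \Omega^1_c(S)$. Once $\omega$ is supported in some $U_{j_0}$, the sequence $\{\pi_{U_j}(\omega)\}_{j\geq j_0}$ is uniformly $L^2$-bounded, hence (by the interior elliptic estimate for the Hodge Laplacian) locally bounded in every $C^k$-norm; Arzela--Ascoli together with a diagonal extraction then produces a subsequence converging locally uniformly to a smooth harmonic form $h^*$, which lies in $\mathcal{H}_{L^2}(S)$ by Fatou's lemma. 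To identify $h^* = \pi_S(\omega)$, I would test against arbitrary $\eta \in \mathcal{H}_{L^2}(S)$: since $\eta|_{U_j} \in \mathcal{H}_{L^2}(U_j)$,
\[
\langle \omega, \eta \rangle_S = \langle \omega, \eta|_{U_j} \rangle_{U_j} = \langle \pi_{U_j}(\omega), \eta|_{U_j} \rangle_{U_j} \longrightarrow \langle h^*, \eta \rangle_S,
\]
which forces $h^* - \pi_S(\omega) \perp \mathcal{H}_{L^2}(S)$ and hence $h^* = \pi_S(\omega)$. Uniqueness of the limit promotes the weak convergence to the full sequence, whence $\|\pi_{U_j}(\omega)\|^2 = \langle \omega, \pi_{U_j}(\omega)\rangle \to \langle \omega, \pi_S(\omega)\rangle = \|\pi_S(\omega)\|^2$.

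To pass from this single-form statement to the sum appearing in Proposition~\ref{prop:cm_projection_formula}, I would choose via Gram--Schmidt on a countable dense subset of $\Omega^1_c(A)$ an orthonormal basis $\{\omega_k\}$ of $\Omega_{L^2}(A)$ consisting of compactly supported forms. Assuming first that $A$ is relatively compact in some $U_{j_0}$, part (a) yields the summable dominator $\|\pi_{U_j}(\omega_k)\|^2 \leq \|\pi_{U_{j_0}}(\omega_k)\|^2$ (with total $2\mu_{\can}^{U_{j_0}}(A) < \infty$), and Lebesgue's dominated convergence theorem delivers the limit; the general case reduces to this by exhausting $A$ by relatively compact open subsets. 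The delicate step is the identification $h^* = \pi_S(\omega)$: the spaces $\mathcal{H}_{L^2}(U_j)$ can dwarf $\mathcal{H}_{L^2}(S)$ (for instance, on $S=\CC$ with $U_j$ nested disks, the former are infinite-dimensional while the latter vanishes), so there is no a priori reason the projections $\pi_{U_j}(\omega)$ should settle onto the comparatively small subspace $\mathcal{H}_{L^2}(S)$; the restriction map $\mathcal{H}_{L^2}(S) \to \mathcal{H}_{L^2}(U_j)$ is precisely the tool that locks the weak limit into the correct subspace.
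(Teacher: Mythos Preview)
Your arguments are correct. For (a) you and the paper both exploit that harmonicity is local, but the paper packages this via an auxiliary closed subspace $\mathcal{H}' \subseteq \Omega_{L^2}(S)$ consisting of $L^2$ forms on $S$ that are harmonic on $U$: one proves the identity $\langle \omega_k, \pi_U(\omega_k)\rangle = \langle \omega_k, \pi'(\omega_k)\rangle$ by a closest-point argument, and then the inequality is immediate from the inclusion $\mathcal{H}_{L^2}(S) \subseteq \mathcal{H}'$. The payoff of this reformulation appears in (b), where the paper's route is much lighter than yours. Setting $\mathcal{H}'_j$ to be the forms in $\Omega_{L^2}(S)$ harmonic on $U_j$ gives a \emph{decreasing} chain of closed subspaces of the \emph{single} Hilbert space $\Omega_{L^2}(S)$ with $\bigcap_j \mathcal{H}'_j = \mathcal{H}_{L^2}(S)$; the identity from (a) then reduces the term-wise convergence $\langle \omega_k, \pi_{U_j}(\omega_k)\rangle \to \langle \omega_k, \pi_S(\omega_k)\rangle$ to the soft Hilbert-space fact that projections onto a decreasing nest converge strongly to the projection onto the intersection. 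Your elliptic-regularity/Arzel\`a--Ascoli argument reaches the same conclusion, but the paper's trick of placing all the projections in one ambient Hilbert space replaces that hard analysis with pure monotonicity. (One small caution: your final exhaustion step, reducing general $A$ to relatively compact $A$, asks to interchange a decreasing limit in $j$ with an increasing one in $m$, which does not follow formally from monotonicity alone; the relatively compact case, where your dominated-convergence argument is clean, already covers the applications later in the paper.)
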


\begin{proof}
By Remark~\ref{rmk:subsurface}, we may consider $\Omega_{L^2}(U)$ and $\Omega_{L^2}(U_j)$'s as Hilbert subspaces of $\Omega_{L^2}(S)$.
Let $\{\omega_k\}_{k \in I}$ be an orthonormal basis for $\Omega_{L^2}(A)$. By Proposition~\ref{prop:cm_projection_formula} we have 
\begin{equation} \label{eq:comp1}
\begin{aligned}
&2\mu_{\can}^U(A)= \sum_{k \in I}\langle \omega_k, \pi_U(\omega_k)\rangle
\, , \\
&2\mu_{\can}^{U_j}(A)= \sum_{k \in I}\langle \omega_k, \pi_{U_j}(\omega_k)\rangle \, , \\
&2\mu_{\can}^S(A)= \sum_{k \in I}\langle \omega_k, \pi_S(\omega_k)\rangle \, ,
\end{aligned}
\end{equation}
where $\pi_S \colon  \Omega_{L^2}(S) \rightarrow \mathcal{H}_{L^2}(S)$ and $\pi_U \colon \Omega_{L^2}(U) \rightarrow \mathcal{H}_{L^2}(U)$ and $\pi_{U_j} \colon \Omega_{L^2}(U_j) \rightarrow \mathcal{H}_{L^2}(U_j)$ denote the orthogonal projections.

(a) Because harmonicity is local, we have $\mathcal{H}_{L^2}(S) \subseteq \mathcal{H}_{L^2}(U)$. 

Let $\mathcal{H}'$ be the space consisting of $L^2$ $1$-forms on $S$ which are also harmonic on $U$ (and therefore smooth on $U$ by elliptic regularity). Note that $\mathcal{H}'$ is a Hilbert subspace of $\Omega_{L^2}(S)$. This is because $\mathcal{H}'$ is the orthogonal complement of the set $\{\Delta(\omega) \colon \omega \in \Omega_{L^2}(S) \text{ is supported on $U$}\}$ (see \eqref{eq:selfadj}), and weak harmonic forms are the same as strong harmonic forms. Let $\pi' \colon\Omega_{L^2}(S) \rightarrow \mathcal{H}'$ be the orthogonal projection. Then 
\begin{equation} \label{eq:comp2}
\langle \omega_k, \pi_U(\omega_k)\rangle=\langle \omega_k, \pi'(\omega_k)\rangle\, . 
\end{equation}

In other words, the orthogonal complement of $\mathcal{H}' \cap \Omega_{L^2}(U)$ in $\mathcal{H}'$ is the same as $\Omega_{L^2}(U)^\perp$ in  $\Omega_{L^2}(S)$. 
To see this, note that $\pi_U(\omega_k)$ is the closest point in $\mathcal{H}_{L^2}(U)$ to $\omega_k$, and $\pi'(\omega_k)$ is the closest point in $\mathcal{H}'$ to $\omega_k$. But, for any $\beta \in \mathcal{H}'$, we have $\|\beta - \omega_k\|^2 = \|\beta|_U-\omega_k\|^2+\|\beta|_{S\backslash U}\|^2$. Therefore $\pi'(\omega_k)$ must be the same as $\pi_U(\omega_k)$ on $U$, and must vanish otherwise.

We have 
\begin{equation} \label{eq:comp3}
\langle \omega_k, \pi'(\omega_k)\rangle\geq \langle \omega_k, \pi_S(\omega_k)\rangle
\end{equation}
because $\mathcal{H}_{L^2}(S)\subseteq \mathcal{H}'$. The result follows by putting together \eqref{eq:comp1}, \eqref{eq:comp2}, and \eqref{eq:comp3}.

\medskip

(b) Let $\mathcal{H}'_{j}$ be the space of $L^2$ $1$-forms on $S$ which are harmonic on $U_j$. Again, $\mathcal{H}'_{j}$ is a Hilbert subspace of $\Omega_{L^2}(S)$ and its elements are smooth on $U_j$. Since $\bigcap_j\mathcal{H}'_{j}=\mathcal{H}_{L^2}(S)$ we have
\begin{equation} \label{eq:comp4}
\lim_{j \rightarrow\infty}\langle \omega_k,\pi_{U_j}(\omega_k)\rangle=\lim_{j\rightarrow\infty}\langle \omega_k,\pi'_j(\omega_k)\rangle=\langle \omega_k,\pi_S (\omega_k)\rangle \, ,
\end{equation}
where $\pi'_j \colon\Omega_{L^2}(S) \rightarrow \mathcal{H}'_j$ is the orthogonal projection.
 The last equality is a consequence of the following fact: for any Hilbert space and a
decreasing sequence of its Hilbert subspaces, the sequence of orthogonal projections onto subspaces converge pointwise to the orthogonal projection onto the intersection of those subspaces (this is an easy consequence of the Gram--Schmidt process and Parseval's theorem). 
The result follows from \eqref{eq:comp1} and \eqref{eq:comp4}.
\end{proof}

\begin{Proposition} \label{prop:radon}
Let $S$ be a Riemann surface.
\begin{itemize}
\item[(a)] $\mu_{\can}$ is absolutely continuous with respect to the Riemann--Lebesgue measure. 
\item[(b)] $\mu_{\can}$ is a Radon measure.
\end{itemize}
\end{Proposition}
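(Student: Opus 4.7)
The plan is to handle parts (a) and (b) separately, the first being nearly immediate and the second reducing to a local boundedness estimate on the pointwise density of $\mu_{\can}$.

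For part (a): by elliptic regularity (already noted just before Definition \ref{def:canmeas}) each $u_j$ is smooth, so $u_j \wedge \star \bar{u}_j$ is a smooth nonnegative $2$-form and the measure $A \mapsto \int_A u_j \wedge \star \bar{u}_j$ is absolutely continuous with respect to the Riemann--Lebesgue measure. Since all summands in Definition \ref{def:canmeas} are nonnegative, a set of Lebesgue measure zero kills every integrand, hence kills $\mu_{\can}(A)$.

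For part (b), (a) together with monotone convergence lets me write $\mu_{\can}(A) = \int_A \rho\, dV$, where $dV$ is the Riemann--Lebesgue measure for any auxiliary Riemannian metric and $\rho(z) = \tfrac{1}{2}\sum_j |u_j(z)|^2$ is a nonnegative measurable density, independent of the chosen orthonormal basis by the same argument as in Lemma \ref{lemcan}(a). Since $S$ is second countable and locally compact Hausdorff and $dV$ is already Radon, it will suffice to show that $\mu_{\can}$ is locally finite, which I will establish by exhibiting a local $L^\infty$ bound on $\rho$.

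Near a fixed $p \in S$, I would take a holomorphic chart identifying a neighborhood $U$ of $p$ with the unit disk $\mathbb{D} \subset \mathbb{C}$ and fix a smaller disk $V' \Subset U$. By Proposition \ref{cor:cm_convergence}(a) it is enough to bound the analogous density $\rho^U$ on $V'$. In these coordinates, any harmonic $u \in \mathcal{H}_{L^2}(U)$ decomposes as $\alpha\,dz + \beta\,d\bar z$ with $\alpha$ holomorphic and $\beta$ antiholomorphic, so the classical sub-mean-value estimate for holomorphic functions on $\mathbb{D}$ yields a constant $C = C(V')$ such that $|u(z)|^2 \leq C\,\|u\|_{L^2}^2$ for every $u \in \mathcal{H}_{L^2}(U)$ and every $z \in V'$. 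The supremum characterization of the reproducing kernel, $\sum_j |u_j(z)|^2 = \sup_{\|u\| \leq 1} |u(z)|^2$, then forces $\rho^U \leq C/2$ on $V'$, as required.

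The only delicate step is the pointwise $L^2 \to L^\infty$ estimate for harmonic forms and its promotion through the reproducing kernel to a bound on the full sum $\rho$; the Radon conclusion then follows from absolute continuity, local finiteness, and the Radon property of $dV$ on the second countable manifold $S$ by standard measure-theoretic bookkeeping.
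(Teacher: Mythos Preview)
Your overall strategy is sound, and both parts are essentially correct, though they differ from the paper's route.

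For (a), your argument is more direct than the paper's: you simply observe that each smooth term $u_j\wedge\star\bar u_j$ gives an absolutely continuous measure and that a countable sum of nonnegative absolutely continuous measures is again absolutely continuous. The paper instead computes $\mu_{\can}^D$ explicitly on the Poincar\'e disk (finding the density $\frac{i}{\pi(1-|z|^2)^2}\,dz\wedge d\bar z$) and then invokes Proposition~\ref{cor:cm_convergence}(a) to transfer this to a general $S$. Your version is cleaner for (a) alone; the paper's explicit computation has the advantage of simultaneously providing the locally bounded density needed in (b).

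For (b), both you and the paper reduce to a local estimate on a disk via Proposition~\ref{cor:cm_convergence}(a). The paper reads off the bound from its explicit formula for $\mu_{\can}^D$; you instead use a sub-mean-value estimate together with a reproducing-kernel argument. The latter is more conceptual and avoids the computation, which is a fair trade.

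There is one genuine slip in your (b). The identity you quote,
\[
\sum_j |u_j(z)|^2 \;=\; \sup_{\|u\|\le 1}|u(z)|^2,
\]
is the scalar reproducing-kernel formula and is \emph{false} for $1$-forms: point evaluation $u\mapsto(\alpha(z),\beta(z))$ is $\CC^2$-valued, and the left side is the Hilbert--Schmidt norm of the evaluation operator while the right side is its operator norm. Concretely, on $\mathbb D$ at $z=0$ the sum equals $1/\pi$ whereas the supremum equals $1/(2\pi)$. The fix is easy: apply the scalar identity separately to the two bounded functionals $u\mapsto\alpha(z)$ and $u\mapsto\beta(z)$, obtaining
\[
\sum_j|\alpha_j(z)|^2=\sup_{\|u\|\le 1}|\alpha(z)|^2\le C,\qquad
\sum_j|\beta_j(z)|^2=\sup_{\|u\|\le 1}|\beta(z)|^2\le C,
\]
and add. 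This gives $\rho^U\le C$ (or $2C$, depending on normalization) on $V'$, which is all you need. With that correction your proof of (b) goes through.
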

\begin{proof}
(a) We show that the measure $\mu_{\can}$ is absolutely continuous with respect to any Riemann-Lebesgue measure computed with any smooth Riemannian metric in the conformal class. It does not matter which Riemannian metric is used, because if two different Riemannian metrics result in two measures $\mu_1$ and $\mu_2$ then there is a smooth positive function $f$ so that $\mu_2=f\mu_1$, hence $\mu_2$ is absolutely continuous with respect to $\mu_1$ and vice versa.

Let $D$ be the Poincar\'e unit disk. An orthogonal basis of $\mathcal{H}_{L^2}(D)$ is 
\[\{z^ndz ,  \bar{z}^nd\bar{z}\colon n \in \ZZ^{\geq 0}\} \, .\] 
For any Borel subset $A\subseteq D$ we have, by Definition~\ref{def:canmeas},
\[\mu_{\can}^D(A)={i\over 2} \int_A \left( \sum_n{n+1\over\pi} |z|^{2n} dz \wedge d\bar{z} - \sum_n{n+1\over\pi} |z|^{2n} d\bar{z} \wedge dz \right)\]
\[=\int_A \frac{i}{\pi}\sum_n{(n+1)}|z|^{2n}dz \wedge d\bar{z}=\int_A{\frac{i}{\pi(1-|z|^2)^2}}dz \wedge d\bar{z}\]
Therefore $\mu_{\can}^D$ is absolutely continuous with respect to the Riemann--Lebesgue measure on the disk. 
For a general surface $S$, around any point there is a conformally embedded open disk. Therefore the result follows from Proposition~\ref{cor:cm_convergence}~(a). 

(b) Since $S$ is a second countable, locally compact Hausdorff space, we only need to show that $\mu_{\can}$ is finite on compact sets (see, e.g., \cite[Theorem 7.8]{Folland}).
The Riemann-Lebesgue measure is locally finite and, by part (a), $\mu_{\can}$ is absolutely continuous with respect to the Riemann-Lebesgue measure. Furthermore, from the computation in part (a), we see that the Radon--Nikodym derivative of $\mu_{\can}$ with respect to the Riemann-Lebesgue measure is bounded from above by a locally bounded function. This implies that the $\mu_{\can}$ is bounded on compact sets.
\end{proof}

\subsection{Canonical (Arakelov) forms} \label{sec:arakelovrelation}

\subsubsection{Compact Riemann surfaces}
Let $S$ be a compact Riemann surface of genus $g \geq 1$. Fix an orthonormal (with respect to the Hodge inner product) basis $\{\varphi_1, \cdots , \varphi_g\}$ for the vector space of (global) holomorphic $1$-forms (differentials of the first kind) $\Omega^{1,0}(S)$. 
Following \cite{Arakelov, Faltings} (see also \cite[II,\S2]{LangAr} or \cite[\S3]{deJong}), the {\em canonical (Arakelov) $(1,1)$-form} of $S$ is defined to be 
 \[\eta_{\can} = \frac{i}{2} \sum_{k=1}^g{\varphi_k \wedge \bar{\varphi}_k} \, .\]

It is easy to check (using Riemann--Roch theorem) that $\eta_{\can}$ is indeed a {\em volume form}, and \[\int_S \eta_{\can} = g \, .\]

\begin{Remark} \phantomsection
\begin{itemize}
\item[]
\item[(i)] On the Jacobian $J(S)$ of $S$ there exists a canonical translation-invariant $(1,1)$-form $\eta_J$ obtained by identifying $J(S)$ with $\Omega^{1,0}(S)^*/H_1(S, \ZZ)$. One can easily check that $\eta_{\can}$ is obtained by pulling back $\eta_J$ along the Abel--Jacobi map.
\item[(ii)] In Arakelov geometry, it is customary to study $\eta_{\can} /g$ instead of $\eta_{\can}$. This normalization is not suitable in our context, as we will also deal with non-compact Riemann surfaces.
\end{itemize}
\end{Remark}

\begin{Remark} \phantomsection \label{rmk:metricsVSforms}
\begin{itemize}
\item[]
\item[(i)] It is known that $\eta_{\can}$ can be obtained as the Chern form of the canonical line bundle $K_S$ equipped with the {\em Arakelov metric} \cite[\S4]{Arakelov}. For a description of the Arakelov metric in terms of {\em Arakelov Green's function} see \cite[II,\S2]{LangAr} or \cite[\S3]{deJong}. 
\item[(ii)] The {\em canonical metric} on $S$ is, by definition, the hermitian metric on the holomorphic tangent bundle $T_S$ defined by $ds^2_{\can} = \sum_{k=1}^g \varphi_k \otimes \bar{\varphi}_k$ (for the orthonormal basis $\{\varphi_1, \cdots , \varphi_g\}$ as above). The canonical $(1,1)$-form $\eta_{\can}$ is the {\em associated $(1, 1)$-form} of $ds^2_{\can}$, i.e. $\eta_{\can} = -(1/2){\rm Im}\,{ds^2_{\can}}$. It is well-known (see, e.g., \cite[pp 28--29]{GriffHar}) that $\eta_{\can}$ uniquely determines $ds^2_{\can}$.
\item[(iii)] Let the $(1,1)$-form $\eta_B$ denote the Chern form of the pair $(T_S, ds^2_{\can})$. Let $ds^2_B$ be the corresponding hermitian metric on the holomorphic tangent bundle $T_S$. More precisely, $\eta_{B} = -(1/2){\rm Im}\, {ds^2_{B}}$ and $\eta_{B}$ is the {\em associated $(1, 1)$-form} of $ds^2_{B}$. It can be checked that $ds^2_{B}$ coincides with the pullback Fubini--Study metric under the canonical mapping $S \rightarrow \mathbb{P}^{g-1}$.
\item[(iv)] In the literature, the term {\em `Bergman metric'} sometimes refers to $ds^2_{\can}$ (see, e.g., \cite{Neeman, mcmullen2013entropy}), and sometimes refers to $ds^2_{B}$ (see, e.g., \cite{mumford, rhodes1993sequences}).
\end{itemize}
\end{Remark}

\subsubsection{General Riemann surfaces}
Let $S$ be a (possibly non-compact) Riemann surface. Fix an orthonormal basis (with respect to the Hodge inner product) $\{\varphi_k\}$ for the Hilbert space of holomorphic $1$-forms.
The canonical (Arakelov) $(1,1)$-form of $S$ is defined to be the nonnegative $(1,1)$-form
\[
\eta_{\can} = \frac{i}{2} \sum_k \varphi_k \wedge \bar{\varphi}_k \, .
\]
Recall (see \S\ref{sec:measform}) the map $A \mapsto \int_A{\eta_{\can}}$ defines a measure $\mu_{\eta_{\can}}$ on $S$.
\begin{Lemma}\label{lem:measvsform}
We have
$\mu_{\can}^S = \mu_{\eta_{\can}}$.
\end{Lemma}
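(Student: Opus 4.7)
The plan is to reduce everything to a direct computation using an orthonormal basis of $\mathcal{H}_{L^2}(S)$ that is adapted to the Hodge decomposition into types $(1,0)$ and $(0,1)$. The key identification to establish first is the orthogonal decomposition
\[
\mathcal{H}_{L^2}(S) = \Omega_{L^2}^{1,0}(S) \oplus \Omega_{L^2}^{0,1}(S) \, .
\]
Orthogonality is already recorded in Remark~\ref{rmk:holant}. For the equality, I would take a smooth harmonic $1$-form $\omega = f\,dz + g\,d\bar{z}$ in a local coordinate and compute $d\omega$ and $d\star\omega$ using \eqref{star_eq}; the conditions $d\omega = d\star\omega = 0$ force $\partial f/\partial\bar{z} = 0$ and $\partial g/\partial z = 0$, so $\omega$ splits locally, and hence globally, as the sum of a holomorphic and an antiholomorphic form. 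The two summands are individually in $L^2(S)$ by orthogonality. (Elliptic regularity, already invoked above \eqref{eq:harmonics}, guarantees $\omega$ is smooth.) Conversely, holomorphic and antiholomorphic forms are harmonic, since $\star$ preserves them up to a factor of $\pm i$.

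Next, I would fix an orthonormal basis $\{\varphi_k\}$ of $\Omega_{L^2}^{1,0}(S)$ with respect to the Hodge inner product $(\cdot,\cdot)$. By Remark~\ref{rmk:holant}, $\{\varphi_k/\sqrt{2}\}$ is then orthonormal in $\Omega_{L^2}^{1,0}(S)$ with respect to $\langle\cdot,\cdot\rangle$. Since complex conjugation $\alpha\mapsto\bar{\alpha}$ is an antilinear isometry between $\Omega_{L^2}^{1,0}(S)$ and $\Omega_{L^2}^{0,1}(S)$ (again by Remark~\ref{rmk:holant}), the collection $\{\bar{\varphi}_k/\sqrt{2}\}$ is orthonormal in $\Omega_{L^2}^{0,1}(S)$. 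Combining them yields an orthonormal basis of $\mathcal{H}_{L^2}(S)$, which is the basis I will plug into Definition~\ref{def:canmeas}.

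Finally, using \eqref{star_eq} to compute $\star\bar{\varphi}_k = i\bar{\varphi}_k$ and $\star\varphi_k = -i\varphi_k$, a direct calculation gives
\[
\frac{\varphi_k}{\sqrt{2}}\wedge\star\overline{\left(\frac{\varphi_k}{\sqrt{2}}\right)} = \frac{i}{2}\,\varphi_k\wedge\bar{\varphi}_k \, ,
\]
\[
\frac{\bar{\varphi}_k}{\sqrt{2}}\wedge\star\overline{\left(\frac{\bar{\varphi}_k}{\sqrt{2}}\right)} = -\frac{i}{2}\,\bar{\varphi}_k\wedge\varphi_k = \frac{i}{2}\,\varphi_k\wedge\bar{\varphi}_k \, .
\]
Summing over $k$ and dividing by $2$ per Definition~\ref{def:canmeas} produces precisely $\int_A \frac{i}{2}\sum_k \varphi_k\wedge\bar{\varphi}_k = \int_A \eta_{\can} = \mu_{\eta_{\can}}(A)$. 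By Lemma~\ref{lemcan}~(a), the result is independent of the chosen basis, completing the proof.

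The only genuinely nontrivial point is the orthogonal decomposition of $\mathcal{H}_{L^2}(S)$ in the non-compact $L^2$ setting; once that is in hand, the rest is a one-line computation using the definition of $\star$.
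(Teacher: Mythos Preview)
Your proof is correct and follows essentially the same route as the paper's own argument: both pick the orthonormal basis $\{\varphi_k/\sqrt{2},\ \bar{\varphi}_k/\sqrt{2}\}$ of $\mathcal{H}_{L^2}(S)$ and perform the identical two-line computation with \eqref{star_eq}. The only difference is that you spell out the justification for the orthogonal splitting $\mathcal{H}_{L^2}(S) = \Omega_{L^2}^{1,0}(S)\oplus\Omega_{L^2}^{0,1}(S)$, whereas the paper simply asserts that this collection is an orthonormal basis; one small caveat is that your local argument uses $d\omega = d\star\omega = 0$, which for $L^2$-harmonic forms in the non-compact setting is exactly what the $L^2$ Hodge--de Rham decomposition (already invoked by the paper after \eqref{eq:harmonics}) provides, so you should cite that rather than treat closedness and co-closedness as automatic from $\Delta\omega=0$.
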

\begin{proof}
Let $\{\varphi_k\}$ be an orthonormal basis (with respect to the Hodge inner product) for the vector space of holomorphic $1$-forms. An orthonormal basis (with respect to the product given in \eqref{eq:innerprod}) for $\mathcal{H}_{L^2}(S)$ is $\{\frac{1}{\sqrt{2}}\varphi_k , \frac{1}{\sqrt{2}}\bar{\varphi}_k\}$. For a Borel set $A \subseteq S$, by Definition~\ref{def:canmeas}, we have
\[
\begin{aligned}
\mu_{\can}^S(A) &= \frac{1}{2} \left(\sum_k \int_A{\frac{1}{\sqrt{2}}\varphi_k \wedge \star\frac{1}{\sqrt{2}}\bar{\varphi}_k} + \sum_k \int_A{\frac{1}{\sqrt{2}}\bar{\varphi}_k \wedge \star\frac{1}{\sqrt{2}}{\varphi}_k} \right)\\
&= \frac{1}{2} \left(\sum_k \int_A{\frac{i}{2} \varphi_k \wedge \bar{\varphi}_k} - \sum_k \int_A{\frac{i}{2}\bar{\varphi}_k \wedge {\varphi}_k} \right)\\
&= \int_A{\eta_{\can}} \\
&= \mu_{\eta_{\can}}(A) \, .
\end{aligned}
\]

Note that $\varphi \wedge \star \bar{\varphi} = i \varphi \wedge \bar{\varphi}$ and $\bar{\varphi} \wedge \star {\varphi} = -i \bar{\varphi} \wedge {\varphi}$ for any $\varphi \in \Omega_{L^2}^{1,0}(S)$.
\end{proof}

There is also a nice local description for $\eta_{\can}$ which we now describe (see \cite[Appendix]{mcmullen2013entropy}). Consider any holomorphic local chart $(U_P, z\colon U_P \xrightarrow{\sim}D_\epsilon)$, sending a neighborhood $U_P$ of some point $P \in S$ to an $\epsilon$-disk $D_\epsilon \subset \CC$ around $0 \in \CC$. The canonical $(1,1)$-form $\eta_{\can}$ can be expressed on $U_P$ in terms of the coordinate $z$ as
  \[({i}/{2}) \, \rho \, dz\wedge d\bar{z}\] 
for some real-valued nonnegative function $\rho \colon D_\epsilon \rightarrow \RR$. If $\{\varphi_k\}$ is an orthonormal basis of the space of holomorphic $1$-forms on $S$, and $\varphi_k = a_k(z)dz$ in terms of the coordinate $z$, then $\rho(z) = \sum_k |a_k(z)|^2$.
  \begin{Lemma}\label{lem1} 
  We have 
    \[\rho(z)=\max \{|a(z)|^2 \colon \varphi \text{ is holomorphic on }S, (\varphi, \varphi)=1, \varphi|_{U_P}=a(z)dz\} \, .\]
  \end{Lemma}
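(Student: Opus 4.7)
The plan is to recognize this as a reproducing-kernel-Hilbert-space statement for the Bergman-type space $\Omega_{L^2}^{1,0}(S)$ (with the Hodge inner product $(\cdot,\cdot)$), and then use Parseval to identify the squared norm of the reproducing vector with $\rho(z)$.

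Fix a point $z_0 \in D_\epsilon$ and define the evaluation functional
\[
L_{z_0}\colon \Omega_{L^2}^{1,0}(S) \to \CC, \qquad L_{z_0}(\varphi) = a(z_0), \quad \text{where } \varphi|_{U_P}=a(w)\,dw.
\]
The first step is to show $L_{z_0}$ is bounded. Choose a small disk $\{|w-z_0|<r\} \subset D_\epsilon$. Since $a$ is holomorphic, $|a|^{2}$ is subharmonic, so by the mean value inequality
\[
|a(z_0)|^{2} \leq \frac{1}{\pi r^{2}} \iint_{|w-z_0|<r} |a(w)|^{2}\, dx\,dy.
\]
A direct local computation in the coordinate $w=x+iy$ gives $\varphi \wedge \star \bar\varphi = 2|a|^{2}\,dx\wedge dy$, hence $(\varphi,\varphi) = \tfrac{1}{2}\langle \varphi,\varphi\rangle \geq \iint_{|w-z_0|<r}|a|^{2}\,dx\,dy$. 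Combining, $|L_{z_0}(\varphi)|^{2} \leq (\pi r^{2})^{-1}(\varphi,\varphi)$, so $L_{z_0}$ is a bounded linear functional.

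By the Riesz representation theorem applied to the Hilbert space $\Omega_{L^2}^{1,0}(S)$ with Hodge inner product, there exists $\psi_{z_0} \in \Omega_{L^2}^{1,0}(S)$ with
\[
L_{z_0}(\varphi) = (\varphi, \psi_{z_0}) \quad \text{for all } \varphi \in \Omega_{L^2}^{1,0}(S).
\]
By Cauchy--Schwarz, for any unit-norm holomorphic $\varphi$,
\[
|a(z_0)|^{2} = |(\varphi, \psi_{z_0})|^{2} \leq (\psi_{z_0}, \psi_{z_0}),
\]
with equality attained by $\varphi = \psi_{z_0}/\|\psi_{z_0}\|$ whenever $\psi_{z_0} \neq 0$ (and if $\psi_{z_0}=0$ both sides are zero, so the formula also holds trivially, as then $a_k(z_0)=0$ for every $k$). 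Thus the maximum in the statement equals $(\psi_{z_0},\psi_{z_0})$.

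Finally, expand $\psi_{z_0}$ in the orthonormal basis $\{\varphi_k\}$:
\[
(\psi_{z_0}, \psi_{z_0}) = \sum_k |(\psi_{z_0}, \varphi_k)|^{2} = \sum_k |\overline{L_{z_0}(\varphi_k)}|^{2} = \sum_k |a_k(z_0)|^{2} = \rho(z_0),
\]
by Parseval's theorem, which completes the proof. The only mildly subtle step is verifying boundedness of the evaluation functional; everything else is a clean application of Riesz--Parseval, so I do not anticipate any real obstacle.
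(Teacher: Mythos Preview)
Your proof is correct. It reaches the conclusion via the reproducing-kernel formalism (Riesz representation followed by Parseval), whereas the paper proves it in three lines by writing any unit-norm holomorphic $\varphi$ as $\sum_k c_k \varphi_k$ with $\sum_k |c_k|^2 = 1$, so that $a(z) = \sum_k c_k a_k(z)$, and applying the Cauchy--Schwarz inequality directly in $\ell^2$ to get $|a(z)|^2 \le \sum_k |a_k(z)|^2 = \rho(z)$, with equality for the obvious choice of $c_k$. The two arguments are the same Cauchy--Schwarz in different packaging: your reproducing vector $\psi_{z_0}$ is precisely $\sum_k \overline{a_k(z_0)}\,\varphi_k$. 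The one thing your route genuinely adds is the explicit verification, via the sub-mean-value inequality, that the point-evaluation functional is bounded, equivalently that $\sum_k |a_k(z_0)|^2 < \infty$; the paper's argument tacitly uses this when it asserts that the supremum is attained, and it is actually needed in the non-compact case (the lemma is later applied to the infinite cover $S'$).
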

  \begin{proof}
    Let $\{\varphi_k\}$ be an orthonormal basis of the space of holomorphic $1$-forms on $S$, and let $\varphi_k = a_k(z)dz$ in terms of the coordinate $z$. We have
 \[   
 \begin{aligned}
 & \max \{|a(z)|^2 \colon \varphi \text{ is holomorphic on }S, (\varphi, \varphi)=1, \varphi|_{U_P}=a(z)dz\}  \\
&= \max \{|\sum_k{c_k a_k(z)}|^2 \colon \sum_j |c_j|^2=1, c_j \in \CC \}\\
&= {\sum_k |a_k(z)|^2}={\rho(z)} \, .
 \end{aligned}
    \]
    The second equality is by Cauchy--Schwarz.  
    \end{proof}

\section{A variant of L\"uck's approximation}\label{sec:l2approx}

\subsection{Hilbert $\G$-modules and $\G$-dimensions}\label{sec:Gdim}

We quickly review the von Neumann algebras and dimensions that appear in our context. See \cite{luck} for proofs and a more thorough treatment. 

For any complex Hilbert space $\H$, let $\B(\H)$ denote the algebra of all bounded linear operators on $\H$, and let $\B(\H)^+ = \{A \in \B(\H) \colon \langle Ax , x \rangle \in \RR^{\geq 0} \, , \, \forall x \in \H \}$.
By a {\em Hilbert $\G$-module} we mean a Hilbert space $\H$ together with a (left) unitary action of the discrete group $\G$. 
A {\em free Hilbert $\G$-module} is a Hilbert $\G$-module which is unitarily isomorphic to $\ell^2(\G) \otimes \H$, where $\H$ is a Hilbert space with the trivial $\G$-action and the action of $\G$ on $\ell^2(\G)$ is by left translations. 
Let $\{u_\alpha\}_{\alpha \in J}$ be an orthonormal basis for $\H$. Then we have an orthogonal decomposition
\[
\ell^2(\G) \otimes \H = \bigoplus_{\alpha \in J} \ell^2(\G)^{(\alpha)} = \{\sum_{\alpha \in J}{f_{\alpha} \otimes u_{\alpha}} \colon f_{\alpha} \in \ell^2(\G) \, , \,  \sum_{\alpha \in J} \|f_{\alpha}\|^2 < +\infty \} \, , 
\]
where $\ell^2(\G)^{(\alpha)} = \ell^2(\G) \otimes u_{\alpha}$ is a copy of $\ell^2(\G)$. 

For each $g \in \G$, we have the left and right translation operators $L_g , R_g \in \B(\ell^2(\G))$ defined by $L_g(f)(h) = f(g^{-1}h)$, $R_g(f)(h) = f(hg)$ for $h \in \G$. We are interested in the von Neumann algebra $\M_r(\G) \otimes \B(\H)$ on $\ell^2(\G) \otimes \H$, where $\M_r = \M_r(\G)$ is the von Neumann algebra generated by $\{R_g \colon g \in \G \} \subseteq \B(\ell^2(\G))$. Alternatively $\M_r$ is the algebra of $\G$-equivariant (i.e. those commuting with all operators $L_g$) bounded operators on $\ell^2(\G)$.

\begin{Definition} \label{def:trG}
Let $\{u_\alpha\}_{\alpha \in J}$ be an orthonormal basis for the Hilbert space $\H$.
For every $A \in (\M_r(\G) \otimes \B(\H))^+$, define 
\[
\Tr_{\G}(A)  = \sum_{\alpha \in J} \langle A(\delta_h \otimes u_{\alpha}) , \delta_h \otimes u_{\alpha} \rangle \, ,
\]
for a fixed $h \in \G$. Here $\delta_h$ denotes the indicator function of $h$. This is independent of the choice of $h$, so one usually picks $h = {\rm id}$, the group identity. It can be checked that this is a ``trace function'' in the sense of von Neumann algebras.
\end{Definition}

A {\em projective Hilbert $\G$-module} is a Hilbert $\G$-module $\V$ which is unitarily isomorphic to a closed submodule of a free Hilbert $\G$-module, i.e. a closed $\G$-invariant subspace in {\em some} $\ell^2(\G) \otimes \H$.
Note that the embedding of $\V$ into $\ell^2(\G) \otimes \H$ is {\em not} part of the structure; only its existence is required.
Fix such an embedding. Let $P_{\V}$ denote the orthogonal projection from $\ell^2(\G) \otimes \H$ onto $\V$. Then $P_{\V} \in \M_r(\G) \otimes \B(\H)$ because it commutes with all $L_g \otimes I$.
The $\G$-dimension of $\V$ is defined as
\begin{equation}\label{eq:gdim}
\dim_{\G}(\V) = \Tr_{\G}(P_{\V}) \, .
\end{equation}

An elementary fact is that $\dim_{\G}(\V)$ does {\em not} depend on the choice of the embedding of $\V$ into a free Hilbert $\G$-module; it is a well-defined invariant of $\V$. 

\begin{Remark} \label{rmk:dimGprops} 
The $\G$-dimensions satisfy the following expected properties: 
\begin{itemize}
\item[(i)] $\dim_{\G}(\V) = 0$ if and only if $\V = \{0\}$.
\item[(ii)] $\dim_{\G}(\ell^2(\G)) = 1$.
\item[(iii)] $\dim_{\G}(\ell^2(\G) \otimes H) = \dim_{\CC}(H)$.
\item[(iv)] $\dim_{\G}(\V_1\oplus \V_2)=\dim_{\G}(\V_1) + \dim_{\G}(\V_2)$.
\item[(v)] $ \V_1 \subseteq \V_2$ implies $\dim_{\G}(\V_1) \leq \dim_{\G}(\V_2)$. Equality holds if and only if $\V_1 = \V_2$.
\item[(vi)] If $0\rightarrow \mathrm{U} \rightarrow \V \rightarrow \mathrm{W} \rightarrow 0$ is a short {\em weakly exact sequence} of projective Hilbert $\G$-modules, then $\dim_{\G}(\V) = \dim_{\G}(\mathrm{U})+\dim_{\G}(\mathrm{W})$.
\item[(vii)] If $\V$ and ${\rm W}$ are {\em weakly isomorphic}, then $\dim_{G}(\V) = \dim_{G}({\rm W})$.
\end{itemize}
A sequence of $\mathrm{U} \xrightarrow{i} \V \xrightarrow{p} \mathrm{W}$ of projective Hilbert $\G$-modules is called {\em weakly exact} at $\V$ if ${\rm Kernel}(p) =  \cl({\rm Image}(i))$. A map of projective Hilbert $\G$-modules $\V \rightarrow {\rm W}$ is a {\em weak isomorphism} if it is injective and has dense image.
\end{Remark}

\subsection{$L^2$ Hodge--de Rham theorem}

Let $X' \rightarrow X$ be a Galois covering of a finite CW complex $X$, with $\G$ as the group of deck transformations. 

Let $C^*_{L^2}(X')$ denote its cellular $L^2$-cochain complex:
\[
C^*_{L^2}(X') = {\rm Hom}_{\ZZ \G}\left(\ell^2(\G) \otimes_{\ZZ \G} C_*(X') , \ell^2(\G) \right) \, ,
\]
where $C_*(X')$ is the usual cellular chain complex, considered as a $\ZZ \G$-module.
Fixing a cellular basis for $C_p(X')$, one obtains an explicit isomorphism
\[
C^p_{L^2}(X') \simeq \left(\ell^2(\G)\right)^{n_p}
\]
for some integer $n_p$. Therefore $C^p_{L^2}(X')$ has the structure of a projective Hilbert $\G$-module.  Let $d_{L^2}^p \colon C^p_{L^2}(X') \rightarrow C^{p+1}_{L^2}(X')$ denote the induced $L^2$-coboundary map.
The (reduced) {\em $p$-th $L^2$-cohomology} of the pair $(X',\G)$ is defined by
\[
H^p_{L^2}(X'/ \G) = {\rm Ker}(d_{L^2}^p) / {\rm cl} \left({\rm Image}(d_{L^2}^{p-1})\right) \, .
\]
Note that, since we divide by the {\em closure} of the image, the resulting $H^p_{L^2}(X'/ \G)$ inherits the structure of a Hilbert space. It is, moreover, a projective Hilbert $\G$-module because $H^p_{L^2}(X'/ \G) = {\rm Ker}(d_{L^2}^p) \cap {\rm Image}(d_{L^2}^{p-1})^{\perp}$. Therefore it makes sense to define the {\em $p$-th $L^2$-Betti number} of the pair $(X', \G)$ by
\[
b^p_{L^2}(X'/ \G) = \dim_{\G} H^p_{L^2}(X'/ \G) \, .
\] 
The following $L^2$ version of the Hodge-de Rham theorem is proved in \cite{Dodziuk} (see also \cite[Theorem 1.59]{luck}).

\begin{Theorem}[$L^2$ Hodge--de Rham theorem] \label{thm:HodgedeRham}
Let $M' \rightarrow M$ be a Galois covering of a compact Riemannian manifold  $M$, with $\G$ as the group of deck transformations. Assume further that $M'$ has no boundary. Let $X'$ be an equivariant smooth triangulation of $M'$. Then there is a canonical isomorphism (as projective Hilbert $\G$-modules) between the space of $L^2$ harmonic smooth $p$-forms and the $p$-th $L^2$-cohomology of the pair $(X', \G)$: 
\[\mathcal{H}^p_{L^2}(M') \simeq H^p_{L^2}(X'/ \G) \, .\]
Furthermore, $b^p_{L^2}(X'/ \G) = \dim_{\G} H^p_{L^2}(X'/ \G) = \dim_{\G}\mathcal{H}^p_{L^2}(M')$ is finite.
\end{Theorem}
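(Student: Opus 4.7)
The plan is to follow Dodziuk's approach, constructing a $\G$-equivariant chain equivalence between the smooth $L^2$ de Rham complex on $M'$ and the cellular $L^2$-cochain complex of $X'$, and then combining this with the $L^2$-Hodge decomposition.

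First I would construct the $\G$-equivariant integration (de Rham) map $I \colon \Omega^*_{L^2,\text{sm}}(M') \to C^*_{L^2}(X')$ by sending a smooth form $\omega$ to the cochain assigning each $p$-simplex $\sigma$ of $X'$ the integral $\int_\sigma \omega$. Because $M$ is compact and the triangulation is $\G$-equivariant, a single fundamental domain meets only finitely many simplex-orbits, and the standard pointwise bound $|\int_\sigma \omega|^2 \leq C \int_\sigma |\omega|^2$ (valid for smooth forms and a uniformly bounded geometry) shows that $I$ is $\G$-equivariantly bounded in $L^2$. Dually, I would introduce the Whitney map $W \colon C^*_{L^2}(X') \to \Omega^*_{L^2}(M')$ sending a cochain to its piecewise-linear Whitney form built from barycentric coordinates; $\G$-equivariance of the triangulation again yields uniform bounds over a fundamental domain, so $W$ is $L^2$-bounded. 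Both maps commute with the differentials.

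Next I would show that $I \circ W$ and $W \circ I$ are each chain homotopic to the identity through $L^2$-bounded $\G$-equivariant homotopies. The classical cone/retraction homotopies used by Whitney are supported in neighborhoods of simplices of uniformly bounded geometry, so the same bounds give $L^2$ estimates. Passing to reduced cohomology (cocycles modulo closure of coboundaries) yields mutually inverse morphisms of projective Hilbert $\G$-modules
\[
H^p_{L^2}(X'/\G) \;\cong\; H^p_{L^2,\mathrm{dR}}(M') \, .
\]
Combining with the $L^2$-Hodge decomposition \textemdash\ which identifies the reduced de Rham $L^2$-cohomology with the kernel of the Hodge Laplacian on smooth $L^2$ $p$-forms \textemdash\ produces the canonical isomorphism $\mathcal{H}^p_{L^2}(M') \cong H^p_{L^2}(X'/\G)$ of projective Hilbert $\G$-modules.

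Finally, for finiteness of the $L^2$-Betti numbers: since $X = X'/\G$ is a finite CW complex with, say, $n_p$ cells in dimension $p$, the choice of an equivariant basis gives a $\G$-unitary identification $C^p_{L^2}(X') \cong \ell^2(\G)^{n_p}$, so by Remark~\ref{rmk:dimGprops}(iii) we have $\dim_\G C^p_{L^2}(X') = n_p$. Since $H^p_{L^2}(X'/\G)$ is a closed $\G$-invariant subspace of $C^p_{L^2}(X')$, monotonicity (Remark~\ref{rmk:dimGprops}(v)) gives $b^p_{L^2}(X'/\G) \leq n_p < \infty$. The main obstacle in the argument is the verification of $L^2$-boundedness of $I$, $W$, and the chain homotopy; this is routine but depends crucially on the cocompactness of the $\G$-action (so that estimates over a fundamental domain are uniform) and on the bounded geometry inherited from the compact quotient $M$.
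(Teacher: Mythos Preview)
The paper does not prove this theorem; it simply quotes it from Dodziuk \cite{Dodziuk} (see also \cite[Theorem~1.59]{luck}). Your outline is precisely Dodziuk's argument, so you are reconstructing the cited proof rather than offering an alternative. One small correction: in Dodziuk's setup the identity $I \circ W = \mathrm{id}$ holds \emph{exactly} on cochains (this is the defining property of Whitney forms), so only $W \circ I$ needs the bounded $\G$-equivariant homotopy; otherwise your sketch is accurate.
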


\begin{Remark} \phantomsection \label{rmk:L2betti}
\begin{itemize}
\item[]
\item[(i)] We only use this theorem when $M' = S'$ and $M = S$ are Riemann surfaces and $p=1$. In this case, the space of $L^2$ harmonic smooth $1$-forms $\mathcal{H}^1_{L^2}(M)$ is denoted by $\mathcal{H}_{L^2}(S)$ in \eqref{eq:harmonics}. General $\mathcal{H}^p_{L^2}(M)$ spaces are defined analogously in higher dimensions.
\item[(ii)] It is well known that if $M$ is a Riemann surface of genus $g \geq 1$ we have $\dim_{\G}\mathcal{H}^1_{L^2}(M') = 2g-2$. This follows from \cite[Theorem 1.35]{luck} (see also \cite[Example 1.36]{luck}). Alternatively, one can deduce this from our Thoerem~\ref{thm:luck_l2dim}.
\end{itemize}
\end{Remark}

\subsection{Approximation theorem} 

We will prove a variant of L\"uck's approximation theorem in \cite{Luck2}. 
Let $\G$ be a discrete group as before. Let $f \colon (\mathbb{Z}\G)^n \rightarrow (\mathbb{Z}\G)^m$ be a $\mathbb{Z}\G$-module homomorphism. After tensoring with $\CC$ and completion, we obtain an induced map
\[
f^{(2)}: (l^2(\G))^n\rightarrow(l^2(\G))^m \, .
\]
For any finite index normal subgroup $\G_k \trianglelefteq \G$, we also have a map
\[
f_k: (\mathbb{C}[\G /\G_k])^n\rightarrow (\mathbb{C}[\G/\G_k])^m
\]
induced by the quotient maps and tensoring with $\CC$. 
Concretely, if $F$ denotes the standard matrix of $f$ with respect to the standard bases for $(\mathbb{Z}\G)^n$ and $(\mathbb{Z}\G)^m$, then $F$ also represents the standard matrix of $f^{(2)}$ and of $f_k$. 
In this situation, we have the following beautiful theorem of L\"uck (see \cite[Theorem 2.3]{Luck2}):
\begin{Theorem} \label{thm:luck_kerdim}
Let $\{\G_k\}$ be a descending sequence of finite index normal subgroups of $\G$ such that $\bigcap_k\G_k=\{ {\rm id} \}$.  
Then, $$\dim_\G\Ker(f^{(2)})=\lim_k{\dim_{\CC}\Ker(f_k)\over[\G:\G_k]}\, .$$ 
\end{Theorem}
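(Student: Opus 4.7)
My plan is to follow the classical spectral-measure approach due to L\"uck. Let $F$ denote the standard matrix of $f$ over $\ZZ\G$, which represents both $f^{(2)}$ and $f_k$ in their respective bases. Consider the nonnegative self-adjoint operators $A := (f^{(2)})^* f^{(2)}$ on $(\ell^2(\G))^n$ and $A_k := f_k^* f_k$ on $(\CC[\G/\G_k])^n$; both are represented by $F^*F$, and their spectra lie in a common compact interval $[0,K]$ with $K \ge 1$ depending only on the entries of $F$. Associate to them the spectral measures on $[0,K]$ given by
\[ \mu([0,\lambda]) := \Tr_{\G}\bigl(\chi_{[0,\lambda]}(A)\bigr), \qquad \mu_k([0,\lambda]) := \frac{\#\{\text{eigenvalues of }A_k \text{ in } [0,\lambda]\}}{[\G:\G_k]}. \]
Since $\chi_{\{0\}}(A)$ is the orthogonal projection onto $\Ker(A)=\Ker(f^{(2)})$, and $\dim_\G$ of a projective Hilbert $\G$-module is by definition the $\Tr_\G$ of its defining projection (see \eqref{eq:gdim}), one has $\dim_{\G}\Ker(f^{(2)}) = \mu(\{0\})$; analogously $\dim_\CC \Ker(f_k)/[\G:\G_k] = \mu_k(\{0\})$. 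The theorem thus reduces to the assertion $\mu(\{0\}) = \lim_k \mu_k(\{0\})$.

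The next step is to establish weak convergence $\mu_k \to \mu$ on $[0,K]$ by comparing moments. For each $p \ge 0$, set $a := \sum_{i=1}^{n} ((F^*F)^p)_{ii} \in \ZZ\G$, a finitely supported integer combination $a = \sum_g a_g\, g$. By Definition~\ref{def:trG}, $\int x^p\,d\mu = \Tr_{\G}(A^p) = a_e$, the coefficient of the group identity. On the finite-dimensional side, the regular representation of $\ZZ[\G/\G_k]$ gives
\[ \int x^p \, d\mu_k \;=\; \frac{\Tr(A_k^p)}{[\G:\G_k]} \;=\; \sum_{h \in \G_k} a_h. \]
Since only finitely many $a_h$ are nonzero and $\bigcap_k \G_k = \{{\rm id}\}$, for $k$ large only the contribution $h = e$ survives, so the two moments agree. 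Both measures have total mass $n$ on the common compact interval $[0,K]$, so convergence of all polynomial moments upgrades to weak convergence.

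Weak convergence combined with the Portmanteau theorem applied to the closed set $\{0\}$ immediately yields $\mu(\{0\}) \ge \limsup_k \mu_k(\{0\})$. The reverse inequality is the main obstacle and is where the integrality of $F$ must enter. Realizing $A_k$ as an honest complex matrix of size $n[\G:\G_k]$ via the regular representation of $\ZZ[\G/\G_k]$, all its entries are integers; since $A_k$ is positive semidefinite, the lowest nonvanishing coefficient of its characteristic polynomial equals (up to sign) the product of the nonzero eigenvalues, which is therefore a positive integer and hence at least $1$. Combined with the uniform bound $\lambda_i \le K$, this yields the key log-estimate
\[ \mu_k\bigl((0,\epsilon]\bigr) \;\le\; \frac{n \log K}{\log(1/\epsilon)} \qquad (0 < \epsilon < 1), \]
uniformly in $k$: letting $N$ be the number of nonzero eigenvalues of $A_k$ that are $\le \epsilon$ and $r$ its rank, $\sum_i \log \lambda_i \ge 0$ and $\log \lambda_i \le \log K$ give $\sum_{\lambda_i \le \epsilon}(-\log\lambda_i) \le r \log K$, and since each such summand is at least $\log(1/\epsilon)$ one obtains $N \le n[\G:\G_k]\log K/\log(1/\epsilon)$.

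To conclude, fix any continuity point $\epsilon \in (0,1)$ of the distribution function of $\mu$ (the set of such $\epsilon$ is co-countable). Weak convergence gives $\mu([0,\epsilon]) = \lim_k \mu_k([0,\epsilon])$, so
\[ \mu(\{0\}) \;\le\; \mu([0,\epsilon]) \;=\; \lim_k \mu_k([0,\epsilon]) \;\le\; \liminf_k \mu_k(\{0\}) + \frac{n\log K}{\log(1/\epsilon)}. \]
Letting $\epsilon \to 0^+$ through continuity points yields $\mu(\{0\}) \le \liminf_k \mu_k(\{0\})$, completing the proof. The uniform log-bound near zero, resting entirely on the integrality of $F$, is the delicate non-formal input; without it the spectral mass could in principle accumulate at zero in the limit and the identity would fail.
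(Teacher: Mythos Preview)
Your proof is correct and is precisely L\"uck's original spectral argument. Note, however, that the paper does not give its own proof of this statement: Theorem~\ref{thm:luck_kerdim} is quoted verbatim from \cite[Theorem~2.3]{Luck2} and used as a black box in the proof of Theorem~\ref{thm:luck_l2dim}. So there is nothing in the paper to compare against beyond the citation, and your sketch faithfully reproduces the method of that reference: reduce to comparing the atoms at $0$ of the spectral measures of $F^*F$, prove weak convergence by matching polynomial moments via $\bigcap_k \G_k = \{{\rm id}\}$, get one inequality from Portmanteau, and get the other from the integrality of the characteristic polynomial of $A_k$, which forces the product of nonzero eigenvalues to be at least $1$ and hence yields the uniform bound $\mu_k((0,\epsilon]) \le n\log K/\log(1/\epsilon)$. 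All steps are sound; the only cosmetic point is that taking $K \ge 1$ (which you do) is needed so that $\log K \ge 0$ in the eigenvalue estimate.
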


The following result is an appropriate modification of \cite[Theorem 0.1]{Luck2}, needed for our application. 

\begin{Theorem} \label{thm:luck_l2dim}
Let $X$ be a finite connected CW complex. Let $X' \rightarrow X$ be an infinite Galois covering. Let $\{X_k \rightarrow X\}_k$ be an ascending sequence of finite Galois coverings converging to $X'$, in the sense that the equality 
\[ \bigcap_k\pi_1(X_k)=\pi_1(X')\]
holds in $\pi_1(X)$. Let
\begin{itemize}
\item $\G = \pi_1(X)/\pi_1(X')$ denote the deck transformation group of the covering $X' \rightarrow X$,
\item $r_k = [\pi_1(X):\pi_1(X_k)]$ denote the degree of the covering $X_k \rightarrow X$, 
\item $H^p(X_k, \CC)$ denote the (ordinary) cohomology of $X_k$.
\end{itemize}
Then
\[
\dim_\G H_{L^2}^p(X'/ \G)=\lim_k \frac{\dim_{\CC} H^p(X_k, \CC)}{r_k} \, . 
\]
\end{Theorem}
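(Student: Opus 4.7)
The strategy is to reduce the statement to L\"uck's Theorem~\ref{thm:luck_kerdim} applied coboundary-map by coboundary-map, combined with the standard expression of $p$-th (co)homology as an alternating sum of kernel dimensions of coboundary operators. The key observation is that while the classical L\"uck approximation concerns towers intersecting to the trivial subgroup of $\pi_1(X)$, the hypothesis $\bigcap_k \pi_1(X_k) = \pi_1(X')$ translates (after quotienting by $\pi_1(X')$) into a tower $\{\Gamma_k\}$ of finite-index normal subgroups of $\Gamma$ with trivial intersection, which is precisely the setting in which Theorem~\ref{thm:luck_kerdim} holds.

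First, lift the CW structure of $X$ to a $\Gamma$-equivariant CW structure on $X'$. If $X$ has $n_p$ cells in dimension $p$, then $C_p(X')$ is a free $\ZZ\Gamma$-module of rank $n_p$, and the coboundary $d^p$ is represented by a matrix $F^p$ with entries in $\ZZ\Gamma$. Setting $\Gamma_k = \pi_1(X_k)/\pi_1(X') \trianglelefteq \Gamma$, we have $[\Gamma:\Gamma_k]=r_k$ and $\bigcap_k \Gamma_k = \{1\}$; the quotient $\Gamma/\Gamma_k$ is the deck group of $X_k \to X$. Push $F^p$ forward via the quotient $\ZZ\Gamma \to \CC[\Gamma/\Gamma_k]$ to obtain the matrix for the ordinary coboundary $d^p_k$ on $C^p(X_k,\CC)$; completing the action on $\ell^2(\Gamma)$ gives the $L^2$-coboundary $d^{p,(2)}$. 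Applying Theorem~\ref{thm:luck_kerdim} to $F^p$ yields
\[
\dim_\Gamma \Ker(d^{p,(2)}) \;=\; \lim_{k\to\infty} \frac{\dim_\CC \Ker(d^p_k)}{r_k}
\]
for each $p$.

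Next, express both sides of the desired identity in terms of kernels of coboundaries. For finite-dimensional cohomology, rank-nullity gives
\[
\dim_\CC H^p(X_k,\CC) \;=\; \dim_\CC \Ker(d^p_k) \,+\, \dim_\CC \Ker(d^{p-1}_k) \,-\, n_{p-1} r_k .
\]
For $L^2$-cohomology, the orthogonal complement of $\Ker(d^{p-1,(2)})$ is weakly isomorphic (via $d^{p-1,(2)}$) to $\overline{\mathrm{Image}(d^{p-1,(2)})}$, so Remark~\ref{rmk:dimGprops}(iv),(vii) yield $\dim_\Gamma \overline{\mathrm{Image}(d^{p-1,(2)})} = n_{p-1} - \dim_\Gamma \Ker(d^{p-1,(2)})$, and hence
\[
\dim_\Gamma H^p_{L^2}(X'/\Gamma) \;=\; \dim_\Gamma \Ker(d^{p,(2)}) \,+\, \dim_\Gamma \Ker(d^{p-1,(2)}) \,-\, n_{p-1} .
\]
Dividing the finite-dimensional identity by $r_k$, sending $k\to\infty$, and substituting the kernel-dimension approximation from the previous step immediately identifies $\lim_k \dim_\CC H^p(X_k,\CC)/r_k$ with $\dim_\Gamma H^p_{L^2}(X'/\Gamma)$.

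\emph{Main obstacle.} There is really no deep obstacle beyond correctly invoking Theorem~\ref{thm:luck_kerdim}: once one recognizes that the hypothesis on the tower $\{X_k\}$ translates verbatim into the hypothesis on $\{\Gamma_k\}$ needed by L\"uck's theorem, everything else is bookkeeping with the rank-nullity principle and the standard dimension properties of projective Hilbert $\Gamma$-modules from Remark~\ref{rmk:dimGprops}. The most delicate point to verify is the compatibility of the three incarnations of the coboundary (over $\ZZ\Gamma$, over $\CC[\Gamma/\Gamma_k]$, and over $\M_r(\Gamma)$) as images of a single integer matrix $F^p$, which is guaranteed by using the $\Gamma$-equivariant lift of the CW structure from $X$ to $X'$.
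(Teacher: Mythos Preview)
Your proposal is correct and follows essentially the same approach as the paper's proof: both set $\Gamma_k = \pi_1(X_k)/\pi_1(X')$ to place the tower in the setting of Theorem~\ref{thm:luck_kerdim}, apply that theorem to each coboundary matrix to approximate $\dim_\Gamma \Ker(d^{p,(2)})$, and then recover the Betti numbers via rank--nullity on both sides. The only cosmetic difference is that the paper invokes Remark~\ref{rmk:dimGprops}(vi) on two short weakly exact sequences to compute $\dim_\Gamma \overline{\mathrm{Image}(d^{p-1,(2)})}$, whereas you use the weak isomorphism $(\Ker d^{p-1,(2)})^\perp \to \overline{\mathrm{Image}(d^{p-1,(2)})}$ together with Remark~\ref{rmk:dimGprops}(iv),(vii); these are equivalent.
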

Our proof is a modification of L\"uck's proof of \cite[Theorem 0.1]{Luck2}. There, the proof is given for the special case where $X'$ is the universal cover of $X$.

\begin{proof}
Let $f$ be the (ordinary) coboundary map $d^p \colon C^p(X') \rightarrow C^{p+1}(X')$ on $X'$. Then $f^{(2)}$ is the $L^2$-coboundary
  map $d^p_{L^2} \colon C_{L^2}^p(X') \rightarrow C_{L^2}^{p+1}(X')$ on $X'$, and $f_k$ will be the (ordinary) coboundary map $d^p_k \colon C^p(X_k) \rightarrow C^{p+1}(X_k)$ on $X_k$.

   Let $\G_k=\pi_1(X_k)/\pi_1(X')$. Then $(\G_k)_{k \in \mathbb{N}}$ is a descending sequence of subgroups of $\G = \pi_1(X)/\pi_1(X')$. Moreover $\bigcap_k\pi_1(X_k)=\pi_1(X')$
  implies that $\bigcap_k\G_k=\{ {\rm id} \}$. Clearly, $[\G: \G_k] = r_k$ and, by Theorem~\ref{thm:luck_kerdim}, we obtain: 
\begin{equation} \label{eqn:kerdim}
\dim_\G\Ker (d^p_{L^2})=\lim_k{\dim_{\CC}\Ker (d^p_k)\over r_k}\, .
\end{equation}

Let $n_p$ be the number of $p$-cells in $X$. Then 
\begin{itemize}
\item $C^p_{L^2}(X')$ is isomorphic to $(l^2(\G))^{n_p}$. By Remark~\ref{rmk:dimGprops} (ii) and (iv) we have:
\[\dim_{\G}C^p_{L^2}(X') = n_p \, .\]
\item $C^p(X_k)$ is a free $\CC[\G/\G_k]$-module of rank $n_p$. Hence \[\dim_{\CC} C^p(X_k) = n_p[\G:\G_k]=n_pr_k \, .\] 
\end{itemize}
We have 
\begin{equation}\label{eq:dimHp}
\begin{aligned}
\dim_{\CC} H^p(X_k, \CC) &=\dim_{\CC}\Ker (d^p_k)-\dim_{\CC}{\rm Image} (d^{p-1}_k)\\
&=\dim_{\CC} \Ker (d^p_k) - \left(n_{p-1}r_k-\dim_{\CC} \Ker (d^{p-1}_k) \right)\, .
\end{aligned}
\end{equation}
The second equality in \eqref{eq:dimHp} is by the (usual) rank--nullity theorem.
On the other hand,
\begin{equation}\label{eq:dimHpL2}
\begin{aligned}
\dim_\G H^p_{L^2}(X'/ \G)&=\dim_\G\Ker (d^p_{L^2})-\dim_\G \cl\left({\rm Image} (d^{p-1}_{L^2})\right)\\
&=\dim_\G\Ker (d^p_{L^2}) - \left(n_{p-1}-\dim_\G\Ker (d^{p-1}_{L^2})\right) \ .
\end{aligned}
\end{equation}
Both equalities in \eqref{eq:dimHpL2} follow from Remark~\ref{rmk:dimGprops} (vi), applied to the following short weakly exact sequences: 
\[
0 \rightarrow \cl\left({\rm Image} (d^{p-1}_{L^2})\right) \rightarrow \Ker (d^p_{L^2}) \rightarrow H^p_{L^2}(X'/ \G) \rightarrow 0 \, ,
\]
\[
0 \rightarrow \Ker (d^{p-1}_{L^2}) \rightarrow  C^{p-1}_{L^2}(X') \rightarrow \cl\left({\rm Image} (d^{p-1}_{L^2})\right) \rightarrow 0 \, .
\]

The result follows by putting together \eqref{eqn:kerdim}, \eqref{eq:dimHp}, and \eqref{eq:dimHpL2}.
\end{proof}

\section{A generalized Kazhdan's theorem}\label{sec:proof_Kazhdahn_surface}

\subsection{A Gauss--Bonnet type theorem}
We are now ready to state and prove the following Gauss--Bonnet type theorem.
\begin{Theorem} \label{thm:GaussBonnet} 
Let $\phi:S' \to S$ be an infinite Galois covering of a compact Riemann surface $S$, with $\G = \pi_1(S)/\pi_1(S')$ as the group of deck transformations. Then 
\[
\mu_{\can, \phi} (S)=\frac{1}{2}\dim_\G H^1_{L^2}(S'/ \G) \, .
\] 
\end{Theorem}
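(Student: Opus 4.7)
The plan is to realize $\mu_{\can,\phi}(S)$ as a von Neumann trace, and then identify that trace with $\tfrac{1}{2}\dim_\G H^1_{L^2}(S'/\G)$ via the $L^2$ Hodge--de Rham theorem. The key point is that $\Omega_{L^2}(S')$ is naturally a \emph{free} Hilbert $\G$-module, and the orthogonal projection $\pi_{S'}\colon \Omega_{L^2}(S')\to \mathcal{H}_{L^2}(S')$ is $\G$-equivariant; the formula of Proposition~\ref{prop:cm_projection_formula} is then precisely the definition of $\Tr_\G(\pi_{S'})$ on a fundamental domain.

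First, I would choose a Borel fundamental domain $F\subseteq S'$ for the action of $\G$ whose boundary has Riemann--Lebesgue measure zero (e.g.\ a Dirichlet fundamental polygon with respect to any $\G$-invariant smooth Riemannian metric in the conformal class). Writing $U=F^\circ$ for its interior, Proposition~\ref{prop:radon}(a) gives
\[
\mu_{\can,\phi}(S)=\mu_{\can}^{S'}(F)=\mu_{\can}^{S'}(U),
\]
because the canonical measure is absolutely continuous with respect to Riemann--Lebesgue. Since $S'=\bigsqcup_{h\in\G}h\cdot U$ up to a measure-zero set, extension by zero gives a canonical unitary $\G$-equivariant isomorphism
\[
\Omega_{L^2}(S')\;\simeq\;\bigoplus_{h\in\G}h\cdot\Omega_{L^2}(U)\;\simeq\;\ell^2(\G)\otimes\Omega_{L^2}(U),
\]
under which $\delta_h\otimes u$ corresponds to the translate $h\cdot u$ of $u\in\Omega_{L^2}(U)$.

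Next, since harmonicity is preserved by the biholomorphic deck transformations, $\mathcal{H}_{L^2}(S')$ is a $\G$-invariant closed subspace of $\Omega_{L^2}(S')$, hence a projective Hilbert $\G$-module, and the orthogonal projection $\pi_{S'}$ is an element of $\M_r(\G)\otimes \B(\Omega_{L^2}(U))$. Fixing an orthonormal basis $\{\omega_k\}_{k\in I}$ of $\Omega_{L^2}(U)$ (which is separable by Remark~\ref{rmk:separable}), Definition~\ref{def:trG} and \eqref{eq:gdim} together with Proposition~\ref{prop:cm_projection_formula} yield
\[
\dim_\G \mathcal{H}_{L^2}(S')\;=\;\Tr_\G(\pi_{S'})\;=\;\sum_{k\in I}\langle \pi_{S'}(\omega_k),\omega_k\rangle\;=\;2\,\mu_{\can}^{S'}(U)\;=\;2\,\mu_{\can,\phi}(S).
\]
Finally, the $L^2$ Hodge--de Rham theorem (Theorem~\ref{thm:HodgedeRham}), applied to any equivariant smooth triangulation of $S'$, identifies $\mathcal{H}_{L^2}(S')$ with $H^1_{L^2}(S'/\G)$ as projective Hilbert $\G$-modules, so $\dim_\G \mathcal{H}_{L^2}(S')=\dim_\G H^1_{L^2}(S'/\G)$, and the result follows.

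The main obstacle is the bookkeeping around the fundamental domain: one has to verify that $\partial F$ is negligible for $\mu_{\can}^{S'}$, that the decomposition $\bigoplus_h h\cdot \Omega_{L^2}(U)=\Omega_{L^2}(S')$ is a \emph{unitary} $\G$-isomorphism onto a free Hilbert $\G$-module (so that the $\G$-trace computation is legitimate), and that $\pi_{S'}$ genuinely lies in the affiliated von Neumann algebra. All of these are standard once the fundamental domain is chosen carefully, but they are where one has to be most careful; everything else is then a direct combination of Proposition~\ref{prop:cm_projection_formula}, the definition of $\dim_\G$, and Theorem~\ref{thm:HodgedeRham}.
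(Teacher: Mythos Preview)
Your proposal is correct and follows essentially the same route as the paper: both realize $\Omega_{L^2}(S')$ as the free Hilbert $\G$-module $\ell^2(\G)\otimes\Omega_{L^2}(F)$ for a fundamental domain $F$, compute $\Tr_\G(\pi_{S'})$ via Proposition~\ref{prop:cm_projection_formula} on $F$ to get $2\mu_{\can,\phi}(S)$, and then invoke Theorem~\ref{thm:HodgedeRham} to identify this with $\dim_\G H^1_{L^2}(S'/\G)$. Your extra care in passing to the interior $U=F^\circ$ (using Proposition~\ref{prop:radon}(a) to discard the boundary) is a welcome refinement, since Proposition~\ref{prop:cm_projection_formula} is stated only for open sets.
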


\begin{Remark}\label{rmk:L2betti2}
Recall $\dim_\G H^1_{L^2}(S'/ \G) = -\chi(S) = 2g-2$ by Remark~\ref{rmk:L2betti}~(ii).
\end{Remark}
\begin{proof}
Let $\pi_{S'} \colon \Omega_{L^2}(S') \rightarrow \mathcal{H}_{L^2}(S')$ denote the orthogonal projection, and let $F$ be a fundamental domain of the $\G$-action on $S'$. Let
$\{\epsilon_j\}_{j\in J}$ be an orthonormal basis for the space $\Omega_{L^2}(F) \simeq \Omega_{L^2}(S)$, embedded
into $\Omega_{L^2}(S')$ as a subspace by extending each $1$-form in
$\Omega_{L^2}(F)$ to a $1$-form in $\Omega_{L^2}(S')$ which vanishes outside $F$. We have, by Definition~\ref{def:trG}, 
\[
\Tr_\G (\pi_{S'}) = \sum_j \langle \epsilon_j, \pi_{S'}(\epsilon_j)\rangle \, ,\] 
which, by Proposition \ref{prop:cm_projection_formula}, equals $2 \mu_{\can}^{S'}(F)=2\mu_{\can, \phi}(S)$. The fact that 
\[\Tr_\G (\pi_{S'})=\dim_\G H^1_{L^2}(S'/ \G)\]
follows from Theorem~\ref{thm:HodgedeRham} and the definition of $\G$-dimension \eqref{eq:gdim}.
\end{proof}

\subsection{Convergence of canonical measures}
\begin{Theorem}\label{thm:Kazhdan_surface}
Let $S$ be a compact Riemann surface. Let $\phi: S'\rightarrow S$ be an
infinite Galois cover with deck transformation group $\G$. Let
$\{\phi_n \colon S_n\rightarrow S\}$ be a tower of $d_n$-fold finite Galois covers
between $S'$ and $S$ such that 
\[\bigcap_n \pi_1(S_n)=\pi_1(S')\, .\] 
Then $\mu_{\can, \phi_n}$ converges strongly to $\mu_{\can , \phi}$. 
\end{Theorem}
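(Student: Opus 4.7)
The plan is to extend the trace-formula interpretation used in Theorem~\ref{thm:GaussBonnet} from the whole surface $S$ to an arbitrary relatively compact open $A\subseteq S$, and then to upgrade open-set convergence to setwise convergence on all Borel sets using the absolute continuity from Proposition~\ref{prop:radon}. Pick a fundamental domain $F\subseteq S'$ for $\G=\pi_1(S)/\pi_1(S')$ and, for each $n$, a fundamental domain $F_n\subseteq S_n$ for $\G/\G_n$, where $\G_n=\pi_1(S_n)/\pi_1(S')$. Let $\tilde A=\phi^{-1}(A)\cap F$ and $\tilde A_n=\phi_n^{-1}(A)\cap F_n$ be the corresponding single lifts of $A$; each is conformally isometric to $A$, so fixing an orthonormal basis $\{\epsilon_j\}$ of $\Omega_{L^2}(A)$ and extending by zero gives orthonormal bases of $\Omega_{L^2}(\tilde A)\subset\Omega_{L^2}(S')$ and of $\Omega_{L^2}(\tilde A_n)\subset\Omega_{L^2}(S_n)$. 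Lemma~\ref{lemcan}(b) combined with Proposition~\ref{prop:cm_projection_formula} (applied to the open set $\tilde A\subset S'$ and to $\tilde A_n\subset S_n$) then gives
\[
\mu_{\can,\phi}(A)=\tfrac12\sum_j \|\pi_{S'}(\epsilon_j)\|^2,\qquad \mu_{\can,\phi_n}(A)=\tfrac12\sum_j \|\pi_{S_n}(\epsilon_j)\|^2,
\]
where $\pi_{S'}$ and $\pi_{S_n}$ denote the orthogonal projections onto the respective harmonic subspaces. This reduces the theorem to controlling matrix elements of harmonic projections.

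The heart of the proof is to show, for each fixed compactly supported $\epsilon$, that $\|\pi_{S_n}(\epsilon)\|^2\to\|\pi_{S'}(\epsilon)\|^2$. My plan for this is to pass to the combinatorial picture: choose a $\G$-equivariant smooth triangulation of $S'$ that descends to compatible finite triangulations of $S_n=S'/\G_n$ and of $S$. Theorem~\ref{thm:HodgedeRham} identifies $\pi_{S'}$ (resp.\ $\pi_{S_n}$) with the spectral projection onto the kernel of the cellular $L^2$-Laplacian, a $\mathbb{Z}\G$- (resp.\ $\mathbb{Z}[\G/\G_n]$-) linear operator on a free module. The operator-level L\"uck approximation, in the spirit of Theorem~\ref{thm:luck_kerdim} but refined from total traces to bilinear pairings against a fixed vector, then delivers the desired matrix-element convergence. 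Once this is in hand, the uniform bound $\|\pi_{S_n}(\epsilon_j)\|^2\le\|\epsilon_j\|^2=1$ and Fatou's lemma for sums yield
\[
\mu_{\can,\phi}(A)=\tfrac12\sum_j\lim_n\|\pi_{S_n}(\epsilon_j)\|^2\le \tfrac12\liminf_n\sum_j\|\pi_{S_n}(\epsilon_j)\|^2=\liminf_n\mu_{\can,\phi_n}(A)
\]
for every open $A$.

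To promote the liminf inequality to a genuine limit, I would invoke total-mass convergence: by Lemma~\ref{lemcan}(c) and Riemann--Hurwitz, $\mu_{\can,\phi_n}(S)=g_n/d_n=(g-1)+1/d_n$, while Theorem~\ref{thm:GaussBonnet} together with Remark~\ref{rmk:L2betti2} gives $\mu_{\can,\phi}(S)=g-1$. For a relatively compact open $A$ with Lebesgue-null boundary, Proposition~\ref{prop:radon}(a) yields $\mu_{\can,\phi}(\partial A)=\mu_{\can,\phi_n}(\partial A)=0$, so applying the liminf bound to the open complement $S\setminus\overline A$ and subtracting from the total mass gives $\limsup_n\mu_{\can,\phi_n}(A)\le \mu_{\can,\phi}(A)$, hence honest convergence on such $A$. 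Finally, all measures in sight are Radon and locally dominated by the canonical measure of a conformally embedded Poincar\'e disk (Proposition~\ref{cor:cm_convergence}(a) and the explicit density computed in the proof of Proposition~\ref{prop:radon}), so outer/inner regularity and a dominated-convergence argument propagate setwise convergence from such open sets to all Borel subsets of $S$.

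The main obstacle will be the matrix-element convergence $\|\pi_{S_n}(\epsilon)\|^2\to\|\pi_{S'}(\epsilon)\|^2$. Theorems~\ref{thm:luck_kerdim} and~\ref{thm:luck_l2dim}, as stated, control only a single von Neumann dimension (that is, a full trace), whereas here one needs the analogous statement for the bilinear pairing $\langle\epsilon,\pi\,\epsilon\rangle$ against a fixed vector. Closing this gap — either by an operator-level strengthening of L\"uck's theorem applied to the cellular Laplacian (combined with Hodge--de Rham), or by a heat-kernel/finite-propagation detour in which $\pi$ is first replaced by $e^{-t\Delta}$ for large finite $t$ and then one lets $t\to\infty$ — is where most of the technical work will lie.
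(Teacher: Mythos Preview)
Your approach differs substantially from the paper's, and the obstacle you flag --- matrix-element convergence $\langle\epsilon,\pi_{S_n}\epsilon\rangle\to\langle\epsilon,\pi_{S'}\epsilon\rangle$ for a fixed compactly supported form --- is a genuine gap, considerably harder than the dimension-level L\"uck approximation (Theorems~\ref{thm:luck_kerdim}--\ref{thm:luck_l2dim}) available here. The paper sidesteps it entirely.

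Instead of proving any pointwise convergence of projections, the paper establishes the \emph{opposite} one-sided inequality, $\limsup_n\mu_{\can,\phi_n}(U)\le\mu_{\can,\phi}(U)$ for open $U$, using only the monotonicity and exhaustion properties of Proposition~\ref{cor:cm_convergence}. The idea: lift $U$ to $U'\subset S'$ and choose an exhausting sequence $V_j$ of bounded open sets in $S'$ containing $U'$; since $\bigcap_n\pi_1(S_n)=\pi_1(S')$, each fixed $V_j$ embeds conformally in $S_n$ for all large $n$, so Proposition~\ref{cor:cm_convergence}(a) gives $\mu^{S_n}_{\can}(U_n)\le\mu^{V_j}_{\can}(U')$, and Proposition~\ref{cor:cm_convergence}(b) gives $\mu^{V_j}_{\can}(U')\to\mu^{S'}_{\can}(U')=\mu_{\can,\phi}(U)$. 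Outer regularity of $\mu_{\can,\phi}$ (Proposition~\ref{prop:radon}(b)) then extends this $\limsup$ bound from open sets to all Borel sets. Your total-mass step (Lemma~\ref{lemcan}(c), Theorem~\ref{thm:luck_l2dim}, Theorem~\ref{thm:GaussBonnet}) is used exactly as you describe, and together with subadditivity of $\limsup$ it yields setwise convergence on \emph{every} Borel set at once --- no null-boundary hypothesis, no dominated-convergence bootstrap.

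In short, the geometric monotonicity already proved in Proposition~\ref{cor:cm_convergence} does all the work that your operator-level strengthening of L\"uck would do, at much lower cost; you never invoke that proposition, and it is precisely what lets the paper avoid the hard step you isolated.
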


\begin{proof} 
Since every compact Riemann surface can be triangulated (i.e is homeomorphic to a simplicial complex), we have:
\begin{equation}\label{eq:conv}
\begin{aligned}
\lim_{n\rightarrow\infty}\mu_{\can, \phi_n}(S)&=\lim_{n\rightarrow\infty}{\mu^{S_n}_{\can}(S_n)\over d_n} \\
&=\lim_{n\rightarrow\infty}{\frac{\dim_{\CC} H^1(S_n, \CC)}{2d_n} }&\text{(Lemma~\ref{lemcan} (c))}\\
&=\frac{1}{2}\dim_\G H^1_{L^2}(S'/ \G)&\text{(Theorem \ref{thm:luck_l2dim})}\\
&= \mu_{\can, \phi}(S)\, . &\text{(Theorem~\ref{thm:GaussBonnet})}
\end{aligned}
\end{equation}

\noindent{\bf Claim.}
For any Borel measurable subset $A$ of $S$, we have $\lim\sup_{n\rightarrow\infty}\mu_{\can, \phi_n} (A) \leq\mu_{\can, \phi} (A)$. 

Before proving the claim, we note that the result follows from this claim; together with \eqref{eq:conv} it follows that for any Borel subset $U \subseteq S$ we have 
\[\lim_{n\rightarrow\infty}\mu_{\can, \phi_n}(U)=\mu_{\can, \phi}(U) \, .\] 
This is because $\lambda (\cdot) = \mu_{\can, \phi} (\cdot) - \lim\sup_{n\rightarrow\infty}\mu_{\can, \phi_n} (\cdot)$ is nonnegative for any Borel subsets $A\subseteq S$, and $\lambda(S)\geq \lambda(A)+\lambda(S\backslash A)$, as $\lim\sup$ preserves subadditivity.

\medskip

\noindent{\em Proof of the Claim.} Since all measures $\mu_{can, \phi_n}$ are Radon (by Proposition~\ref{prop:radon}~(b)) and $S$ is compact, one only needs to consider open subsets of $S$. Let $U \subseteq S$ be an open subset, let $U'$ be a lift of $U$ to $S'$, and let $U_n$ be a lift of $U$ to $S_n$. Let $V_j$ be an increasing, exhausting sequence of bounded open subsets on $S'$ that
contains $U'$. Then it follows from the condition $\bigcap_n \pi_1(S_n)=\pi_1(S')$ that each $V_j$ is embedded in all but finitely many of $S_n$'s.

By Proposition \ref{cor:cm_convergence} (a) we know 
$\lim\sup_{n\rightarrow\infty}\mu_{\can, \phi_n}|_U$ is bounded above by the canonical measure on $V_j$ restricted to $U'$. The latter converges to $\mu^{S'}_{\can}(U')=\mu_{\can, \phi}(U)$ by Proposition \ref{cor:cm_convergence} (b).
\end{proof}

\subsection{Convergence of canonical forms}
Theorem~\ref{thm:Kazhdan_surface} together with Lemma~\ref{lem:measvsform} give the {\em weak convergence of forms}. 
Our final goal is to enhance this convergence into a uniform convergence statement.

\begin{Theorem}\label{thm:arakelov conv}
Let $S$ be a compact Riemann surface. Let $\phi: S'\rightarrow S$ be an infinite Galois cover with deck transformation group $\G$. Let $\{\phi_n: S_n \rightarrow S\}$ be a tower of $d_n$-fold finite Galois covers between $S'$ and $S$ such that 
\[\bigcap_n \pi_1(S_n)=\pi_1(S')\, .\] 
Then the pushforward of the canonical $(1,1)$-forms on $S_n$ converge uniformly to the pushforward of the canonical $(1,1)$-form on $S'$.
\end{Theorem}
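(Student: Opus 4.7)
The plan is to upgrade the weak convergence of measures furnished by Theorem~\ref{thm:Kazhdan_surface} and Lemma~\ref{lem:measvsform} to uniform convergence of the local densities via Arzel\`a--Ascoli. Work in a coordinate chart $(U_i, z_i)$ from the finite atlas, and after a finite refinement if necessary assume each chart is a disk with $\overline{U_i}$ contained in a slightly larger concentric disk chart $U_i'$. Write the pushforward forms as $(i/2)\rho_n\, dz_i\wedge d\bar z_i$ and $(i/2)\rho\, dz_i\wedge d\bar z_i$. Since simply connected open sets lift to disjoint copies under every covering, any disk $D(z_0, r)\subset U_i'$ admits a single-sheet lift to $S_n$ and to $S'$, on which holomorphic $1$-forms pull back to ordinary holomorphic functions times $dz$. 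The goal reduces to showing that $\rho_n \to \rho$ uniformly on each $\overline{U_i}$.

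For the $L^\infty$ bound, fix $z_0\in\overline{U_i}$ and apply Lemma~\ref{lem1} to extract a unit-Hodge-norm holomorphic $1$-form $\varphi_n = a_n(z)\, dz$ on $S_n$ (respectively $L^2$-holomorphic on $S'$) realizing $\rho_n(z_0) = |a_n(z_0)|^2$. Subharmonicity of $|a_n|^2$, the pointwise estimate $|a_n|^2 \leq \rho_n$, and the mean value inequality applied on a lifted disk $D(z_0, r)$ give
\[|a_n(z_0)|^2 \leq \frac{1}{\pi r^2}\int_{D(z_0, r)}\rho_n\, dA = \frac{\mu_{\can,\phi_n}(D(z_0,r))}{\pi r^2} \leq \frac{\mu_{\can,\phi_n}(S)}{\pi r^2}.\]
By Lemma~\ref{lemcan}~(c) and Riemann--Hurwitz applied to the unramified Galois cover $\phi_n$, one has $\mu_{\can,\phi_n}(S) = g(S_n)/d_n = g - 1 + 1/d_n$, uniformly bounded, so $\rho_n\leq M$ on $\overline{U_i}$ for some constant $M$ independent of $n$; the same bound holds for $\rho$.

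For equicontinuity, again fix $z_0$ and its associated maximizer $a_n$; the same reasoning shows $|a_n|^2\leq M$ on $D(z_0, r/2)$, so Cauchy's derivative estimate yields $|a_n(z) - a_n(z_0)|\leq C|z - z_0|$ on a smaller disk, where $C$ depends only on $M$ and $r$. Expanding the square,
\[\rho_n(z) \geq |a_n(z)|^2 \geq |a_n(z_0)|^2 - 2\sqrt{M}\,C|z - z_0| = \rho_n(z_0) - 2\sqrt{M}\,C|z-z_0|,\]
and the reverse Lipschitz inequality follows by interchanging $z$ and $z_0$ (using a maximizer at $z$ instead). Hence $\{\rho_n\}$ is uniformly Lipschitz on $\overline{U_i}$ (and so is $\rho$), so by Arzel\`a--Ascoli the family $\{\rho_n\}$ is precompact in $C(\overline{U_i})$. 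Any subsequential uniform limit $\rho^*$ must satisfy
\[\int_A \rho^*\, dA = \lim_{n}\int_A\rho_n\, dA = \mu_{\can,\phi}(A) = \int_A\rho\, dA\]
for every Borel $A\subseteq\overline{U_i}$, where the first equality uses the uniform $L^\infty$ bound and dominated convergence, and the second uses Theorem~\ref{thm:Kazhdan_surface} together with Lemma~\ref{lem:measvsform}. Continuity of $\rho^*$ and $\rho$ then forces $\rho^*\equiv\rho$, so the entire sequence converges uniformly on $\overline{U_i}$, and finiteness of the atlas completes the proof. The delicate step is the Lipschitz estimate, which depends crucially on the extremal characterization of Lemma~\ref{lem1} to reduce pointwise control of the (possibly infinite) sum defining $\rho_n$ to pointwise control of a single holomorphic function whose Taylor expansion can be controlled uniformly by Cauchy's inequality.
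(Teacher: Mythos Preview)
Your proposal is correct and follows essentially the same route as the paper: use the extremal characterization of Lemma~\ref{lem1} to pass from the density $\rho_n$ to a single holomorphic function $a_n$, establish a uniform Lipschitz bound on the $\rho_n$'s in a fixed chart, and then apply Arzel\`a--Ascoli together with the weak convergence of Theorem~\ref{thm:Kazhdan_surface} to conclude uniform convergence. The only difference is in how the Lipschitz constant is obtained: the paper bounds the $L^1$ norm of $a_n$ on the chart directly from $\|\varphi\|=1$ via Cauchy--Schwarz and then uses an averaged Cauchy integral formula to control $|a_n'|$, whereas you first derive a uniform $L^\infty$ bound on $\rho_n$ from the mean value inequality combined with the total-mass bound $\mu_{\can,\phi_n}(S)=g-1+1/d_n$, and then apply the standard Cauchy estimate. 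Both routes are valid and yield the same conclusion.
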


\begin{proof}
  Because $S$ is compact we only need to show the uniform convergence on an open disk around every point. Consider any holomorphic local chart $z$, sending a neighborhood $U_P$ of some point $P \in S$ to an $\epsilon$-disk $D_\epsilon \subset \CC$ around $0 \in \CC$.
  Lift the coordinate chart as well as $U_P$ to $S_n$ and to $S'$. Let the canonical $(1,1)$-form on $S_n$ and on $S'$ on that chart be 
  \[({i}/{2}) \, \rho_n \, dz\wedge d\bar{z} \quad \text{and} \quad  ({i}/{2}) \,\rho_{\infty} \, dz\wedge {d\bar{z}}\] 
  respectively, for some real nonnegative real-valued functions $\rho_n$ and $\rho_\infty$ on $D_\epsilon$.\\ 

By Lemma~\ref{lem1} we know:
    \[\rho_n(z)=\max \{|a(z)|^2 \colon \|\varphi\|=1, \varphi\text{ is holomorphic on }S_n, \varphi|_{U_P}=a(z)dz\} \, ,\]
    \[\rho_\infty(z)=\max \{|a(z)|^2 \colon \|\varphi\|=1, \varphi\text{ is holomorphic on }S', \varphi|_{U_P}=a(z)dz\} \, .\]

\medskip

\noindent {\bf Claim.}
  Both $\rho_n$ and $\rho_\infty$, restricted to the $\epsilon/2$ disk centered at $0$ are {\em uniformly Lipschitz}. More precisely, there exists $L >0$ (depending on $\epsilon$, but independent of $n$) such that
    \[
    |\rho_n(z_1) -\rho_n(z_2)| \leq L |z_1 - z_2| \quad{and} \quad     |\rho_\infty(z_1) -\rho_\infty(z_2)| \leq L |z_1 - z_2|
    \]
    for $|z_1|, |z_2|<\epsilon/2$.
 
 \medskip
 
\noindent {\em Proof of the claim.}
 By Lemma~\ref{lem1}, there is some holomorphic $1$-form on $S_n$ with norm $1$, locally of the form $a_n(z) dz$, such that $|a_n(z_1)|=\rho_n(z_1)$. Hence, also by Lemma~\ref{lem1}, $\rho_n(z_2)\geq |a_n(z_2)|$. 

We know 
\[\int_{U_P}|a_n|^2 \frac{i}{2} dz \wedge d\bar{z} \leq \| \varphi\| = 1 \, .\]
Since $U_P$ has finite area (with respect to $\frac{i}{2} dz \wedge d\bar{z}$) we conclude (by Cauchy--Schwarz) that
\[
\int_{U_P}|a_n| \frac{i}{2} dz \wedge d\bar{z} \leq \left(\int_{U_P} \frac{i}{2} dz \wedge d\bar{z} \right)^{\frac{1}{2}} \, .
\]

Suppose the $L^1$-norm of an analytic function $f$ is bounded by $B$
on the $\epsilon$ disk $D_\epsilon$ centered at the origin. Let
$D_{\epsilon/2}$ be the subdisk with radius $\epsilon/2$ again
centered at the origin. Let $z_0$ be any point in
$D_{\epsilon/2}$. Here, it does not matter whether $D_{\epsilon/2}$ is
closed or not. Then there exists a small disk centered at
$z_0$ with radius $\epsilon/4$ whose closure is completely contained
in $D_\epsilon$. Let $\gamma_{z_0}$ be the boundary of such a
disk. Then by the Cauchy's integral formula, 
$$|f'(z_0)| \leq \frac{1}{2\pi} \int_{\gamma_{z_0}}
\dfrac{|f(z)|}{|z-z_0|^2} \, |dz| \leq \frac{B}{2\pi} \int_{\gamma_{z_0}}
\dfrac{1}{|z-z_0|^2} \, |dz| \, . $$ 
Since $\gamma$ is always a round circle with fixed radius, the last
term is just a uniform constant. Therefore, this gives a uniform bound
on the $L^1$-norm of the derivative of $f$ on $D_{\epsilon/2}$. 
As a consequence, $f$ is Lipschitz on $D_{\epsilon/2}$ with Lipschitz constant uniformly
bounded. In our case, this implies that 
$a_n$ is Lipschitz on the $\epsilon/2$ subdisk, with Lipschitz constant $L$ independent from $n$.

Since $|a_n(z_1)|=\rho_n(z_1)$ and $\rho_n(z_2)\geq |a_n(z_2)|$ we conclude 
\[\rho_n(z_2)\geq \rho_n(z_1)-L|z_2-z_1| \, .\] 
By symmetry, we also have 
\[\rho_n(z_1)\geq \rho_n(z_2)-L|z_1-z_2| \, .\]

 \medskip

Now we show that the convergence of $\rho_n$ to $\rho_\infty$ is uniform on the disk centered at $0$ with radius $\epsilon/2$. If it is not, there must exist some $\epsilon'>0$, and a subsequence $\{\rho_{n_k}\}$ which is $\epsilon'$-away from $\rho_\infty$ under the uniform norm on the disk of radius $\epsilon/2$. Then, by the Arzel\`a--Ascoli theorem and the Claim, there exists a subsequence converging to some function uniformly, which is $\epsilon'$ away from $\rho_\infty$. This contradicts Theorem~\ref{thm:Kazhdan_surface}. 
\end{proof}

\begin{bibdiv}
\begin{biblist}

\bib{ABBGNRS}{article}{
      author={Abert, Miklos},
      author={Bergeron, Nicolas},
      author={Biringer, Ian},
      author={Gelander, Tsachik},
      author={Nikolov, Nikolay},
      author={Raimbault, Jean},
      author={Samet, Iddo},
       title={On the growth of {$L^2$}-invariants for sequences of lattices in
  {L}ie groups},
        date={2017},
        ISSN={0003-486X},
     journal={Ann. of Math. (2)},
      volume={185},
      number={3},
       pages={711\ndash 790},
  url={https://doi-org.proxy.library.cornell.edu/10.4007/annals.2017.185.3.1},
      review={\MR{3664810}},
}

\bib{Arakelov}{article}{
      author={Arakelov, S.~Ju.},
       title={An intersection theory for divisors on an arithmetic surface},
        date={1974},
        ISSN={0373-2436},
     journal={Izv. Akad. Nauk SSSR Ser. Mat.},
      volume={38},
       pages={1179\ndash 1192},
      review={\MR{0472815}},
}

\bib{ChenFu}{article}{
      author={Chen, Bo-Yong},
      author={Fu, Siqi},
       title={Stability of the {B}ergman kernel on a tower of coverings},
        date={2016},
        ISSN={0022-040X},
     journal={J. Differential Geom.},
      volume={104},
      number={3},
       pages={371\ndash 398},
         url={http://projecteuclid.org/euclid.jdg/1478138546},
      review={\MR{3568625}},
}

\bib{Cohn}{book}{
      author={Cohn, Donald~L.},
       title={Measure theory},
     edition={Second},
      series={Birkh\"auser Advanced Texts: Basler Lehrb\"ucher},
   publisher={Birkh\"auser/Springer, New York},
        date={2013},
        ISBN={978-1-4614-6955-1; 978-1-4614-6956-8},
  url={https://doi-org.proxy.library.cornell.edu/10.1007/978-1-4614-6956-8},
      review={\MR{3098996}},
}

\bib{Dodziuk}{article}{
      author={Dodziuk, Jozef},
       title={de {R}ham-{H}odge theory for {$L^{2}$}-cohomology of infinite
  coverings},
        date={1977},
        ISSN={0040-9383},
     journal={Topology},
      volume={16},
      number={2},
       pages={157\ndash 165},
  url={https://doi-org.proxy.library.cornell.edu/10.1016/0040-9383(77)90013-1},
      review={\MR{0445560}},
}

\bib{Donnelly}{article}{
      author={Donnelly, Harold},
       title={Elliptic operators and covers of {R}iemannian manifolds},
        date={1996},
        ISSN={0025-5874},
     journal={Math. Z.},
      volume={223},
      number={2},
       pages={303\ndash 308},
         url={http://dx.doi.org/10.1007/PL00004562},
      review={\MR{1417433}},
}

\bib{evans1998partial}{book}{
      author={Evans, Lawrence~C.},
       title={Partial differential equations},
     edition={Second},
      series={Graduate Studies in Mathematics},
   publisher={American Mathematical Society, Providence, RI},
        date={2010},
      volume={19},
        ISBN={978-0-8218-4974-3},
         url={https://doi-org.proxy.library.cornell.edu/10.1090/gsm/019},
      review={\MR{2597943}},
}

\bib{Faltings}{article}{
      author={Faltings, Gerd},
       title={Calculus on arithmetic surfaces},
        date={1984},
        ISSN={0003-486X},
     journal={Ann. of Math. (2)},
      volume={119},
      number={2},
       pages={387\ndash 424},
         url={https://doi-org.proxy.library.cornell.edu/10.2307/2007043},
      review={\MR{740897}},
}

\bib{Folland}{book}{
      author={Folland, Gerald~B.},
       title={Real analysis},
     edition={Second},
      series={Pure and Applied Mathematics (New York)},
   publisher={John Wiley \& Sons, Inc., New York},
        date={1999},
        ISBN={0-471-31716-0},
        note={Modern techniques and their applications, A Wiley-Interscience
  Publication},
      review={\MR{1681462}},
}

\bib{GriffHar}{book}{
      author={Griffiths, Phillip},
      author={Harris, Joseph},
       title={Principles of algebraic geometry},
      series={Wiley Classics Library},
   publisher={John Wiley \& Sons, Inc., New York},
        date={1994},
        ISBN={0-471-05059-8},
         url={https://doi-org.proxy.library.cornell.edu/10.1002/9781118032527},
        note={Reprint of the 1978 original},
      review={\MR{1288523}},
}

\bib{deJong}{article}{
      author={de~Jong, Robin},
       title={Faltings delta-invariant and semistable degeneration},
        date={2019},
        ISSN={0022-040X},
     journal={J. Differential Geom.},
      volume={111},
      number={2},
       pages={241\ndash 301},
         url={https://doi.org/10.4310/jdg/1549422102},
      review={\MR{3909908}},
}

\bib{Kazhdan}{incollection}{
      author={Kazhdan, David},
       title={Arithmetic varieties and their fields of quasi-definition},
        date={1971},
   booktitle={Actes du {C}ongr\`es {I}nternational des {M}ath\'ematiciens
  ({N}ice, 1970), {T}ome 2},
   publisher={Gauthier-Villars, Paris},
       pages={321\ndash 325},
      review={\MR{0435081}},
}

\bib{Kaj}{inproceedings}{
      author={Kazhdan, David},
       title={On arithmetic varieties},
        date={1975},
   booktitle={Lie groups and their representations ({P}roc. {S}ummer {S}chool,
  {B}olyai {J}\'anos {M}ath. {S}oc., {B}udapest, 1971)},
   publisher={Halsted, New York},
       pages={151\ndash 217},
      review={\MR{0486316}},
}

\bib{luck}{book}{
      author={L\"uck, Wolfgang},
       title={{$L^2$}-invariants: theory and applications to geometry and
  {$K$}-theory},
      series={Ergebnisse der Mathematik und ihrer Grenzgebiete. 3. Folge. A
  Series of Modern Surveys in Mathematics},
   publisher={Springer-Verlag, Berlin},
        date={2002},
      volume={44},
        ISBN={3-540-43566-2},
         url={http://dx.doi.org/10.1007/978-3-662-04687-6},
      review={\MR{1926649}},
}

\bib{Luck2}{article}{
      author={L\"uck, Wolfgang},
       title={Approximating {$L^2$}-invariants by their finite-dimensional
  analogues},
        date={1994},
        ISSN={1016-443X},
     journal={Geom. Funct. Anal.},
      volume={4},
      number={4},
       pages={455\ndash 481},
         url={http://dx.doi.org/10.1007/BF01896404},
      review={\MR{1280122}},
}

\bib{LangAr}{book}{
      author={Lang, Serge},
       title={Introduction to {A}rakelov theory},
   publisher={Springer-Verlag, New York},
        date={1988},
        ISBN={0-387-96793-1},
  url={https://doi-org.proxy.library.cornell.edu/10.1007/978-1-4612-1031-3},
      review={\MR{969124}},
}

\bib{mcmullen2013entropy}{article}{
      author={McMullen, Curtis~T.},
       title={Entropy on {R}iemann surfaces and the {J}acobians of finite
  covers},
        date={2013},
        ISSN={0010-2571},
     journal={Comment. Math. Helv.},
      volume={88},
      number={4},
       pages={953\ndash 964},
         url={http://dx.doi.org/10.4171/CMH/308},
      review={\MR{3134416}},
}

\bib{mumford}{book}{
      author={Mumford, David},
       title={Curves and their {J}acobians},
   publisher={The University of Michigan Press, Ann Arbor, Mich.},
        date={1975},
      review={\MR{0419430}},
}

\bib{Neeman}{article}{
      author={Neeman, Amnon},
       title={The distribution of {W}eierstrass points on a compact {R}iemann
  surface},
        date={1984},
        ISSN={0003-486X},
     journal={Ann. of Math. (2)},
      volume={120},
      number={2},
       pages={317\ndash 328},
         url={https://doi-org.proxy.library.cornell.edu/10.2307/2006944},
      review={\MR{763909}},
}

\bib{ohsawa2009remark}{article}{
      author={Ohsawa, Takeo},
       title={A remark on {K}azhdan's theorem on sequences of {B}ergman
  metrics},
        date={2009},
        ISSN={1340-6116},
     journal={Kyushu J. Math.},
      volume={63},
      number={1},
       pages={133\ndash 137},
         url={http://dx.doi.org/10.2206/kyushujm.63.133},
      review={\MR{2522927}},
}

\bib{Ohsawa2}{article}{
      author={Ohsawa, Takeo},
       title={A tower of {R}iemann surfaces whose {B}ergman kernels jump at the
  roof},
        date={2010},
        ISSN={0034-5318},
     journal={Publ. Res. Inst. Math. Sci.},
      volume={46},
      number={3},
       pages={473\ndash 478},
         url={http://dx.doi.org/10.2977/PRIMS/14},
      review={\MR{2760734}},
}

\bib{rhodes1993sequences}{article}{
      author={Rhodes, John~A.},
       title={Sequences of metrics on compact {R}iemann surfaces},
        date={1993},
        ISSN={0012-7094},
     journal={Duke Math. J.},
      volume={72},
      number={3},
       pages={725\ndash 738},
         url={http://dx.doi.org/10.1215/S0012-7094-93-07227-4},
      review={\MR{1253622}},
}

\bib{ShokriehWu}{article}{
      author={Shokrieh, Farbod},
      author={Wu, Chenxi},
       title={Canonical measures on metric graphs and a {K}azhdan's theorem},
        date={2018},
     journal={Invent. Math.},
         url={http://dx.doi.org/10.1007/s00222-018-0838-5},
        note={\href{https://doi.org/10.1007/s00222-018-0838-5}{DOI
  10.1007/s00222-018-0838-5}},
}

\bib{Treger}{unpublished}{
      author={Treger, Robert},
       title={Bergman type metrics in tower of coverings},
        date={2011},
         url={http://arxiv.org/abs/1107.5713},
        note={Preprint available at \href{https://arxiv.org/abs/1107.5713}{{\tt
  ar{X}iv:1107.5713}}},
}

\bib{Yau}{book}{
      author={Yau, Shing-Tung},
       title={Nonlinear analysis in geometry},
      series={Monographies de L'Enseignement Math\'ematique},
   publisher={L'Enseignement Math\'ematique, Geneva},
        date={1986},
      volume={33},
        note={S\'erie des Conf\'erences de l'Union Math\'ematique
  Internationale, 8},
      review={\MR{865650}},
}

\bib{Yeung}{article}{
      author={Yeung, Sai-Kee},
       title={A tower of coverings of quasi-projective varieties},
        date={2012},
        ISSN={0001-8708},
     journal={Adv. Math.},
      volume={230},
      number={3},
       pages={1196\ndash 1208},
         url={http://dx.doi.org/10.1016/j.aim.2012.03.022},
      review={\MR{2921177}},
}

\end{biblist}
\end{bibdiv}


\end{document}